\DeclarePairedDelimiter{\abs}{\lvert}{\rvert}
\DeclarePairedDelimiter{\norm}{\lVert}{\rVert}
\DeclarePairedDelimiter{\set}{\{}{\}}
\DeclareMathAlphabet{\mathup}{OT1}{\familydefault}{m}{n}
\newcommand{\dx}[1]{\mathop{}\!\mathup{d} #1}
\DeclarePairedDelimiter{\prt}{(}{)}
\DeclarePairedDelimiter{\brk}{[}{]}
\newcommand{\R}{{\mathbb R}}
\newcommand{\Rd}{{\mathbb R^d}}
\newcommand{\curlyH}{\mathcal{H}}
\theoremstyle{plain}
\newtheorem{theorem}{Theorem}[section]
\newtheorem{lemma}[theorem]{Lemma}
\newtheorem{proposition}[theorem]{Proposition}
\theoremstyle{remark}
\newtheorem{remark}[theorem]{\bf Remark}
\newtheorem{definition}[theorem]{\bf Definition}
\newcommand{\ie}{\emph{i.e.}}
\newcommand{\cf}{\emph{cf.}\;}
\newcommand{\0}{_0}
\renewcommand{\i}{^{(i)}}
\newcommand{\1}{^{(1)}}
\newcommand{\2}{^{(2)}}
\newcommand{\epsnu}{_{\epsilon,\nu}}
\newcommand{\sign}{\mathrm{sign}}
\newcommand{\ds}{\displaystyle}
\newcommand{\ddt}{\frac{\dx{}}{\dx t}}
\newcommand{\partialt}[1]{\frac{\partial #1}{\partial t}}
\newcommand{\grad}{\nabla}
\renewcommand{\div}{\nabla\cdot}
\newcommand{\Lap}{\Delta}
\newcommand{\indicator}{\mathbbm{1}}
\begin{document}

\title[Inviscid Limit for a Two-Species Tumour Model]{A degenerate cross-diffusion system as the Inviscid Limit of a nonlocal tissue growth model}

\author{Noemi David$^{1}$} \author{Tomasz D\k{e}biec$^{2}$}
\author{Mainak Mandal$^{3}$}
\author{Markus Schmidtchen$^{3}$}

\address{$^{1}$ Institut Camille Jordan, Université Claude Bernard - Lyon 1, 21 Av. C. Bernard, 69100 Villeurbanne, France (ndavid@math.univ-lyon1.fr)}
\address{$^{2}$ Institute of Applied Mathematics and Mechanics, University of Warsaw, Banacha 2, 02-097 Warsaw, Poland (t.debiec@mimuw.edu.pl).}
\address{$^{3}$ Institute of Scientific Computing, Faculty of Mathematics, TU Dresden
	(mainak.mandal@tu-dresden.de,  markus.schmidtchen@tu-dresden.de).}

\maketitle
\begin{abstract}
    In recent years, there has been a spike in the interest in multi-phase tissue growth models. Depending on the type of tissue, the velocity is linked to the pressure through Stoke's law, Brinkman's law or Darcy's law. While each of these velocity-pressure relations has been studied in the literature, little emphasis has been placed on the fine relationship between them. In this paper, we want to address this dearth in the literature providing a rigorous argument that bridges the gap between a viscoelastic tumour model (of Brinkman type) and an inviscid tumour model (of Darcy type).\\[0.5em]
\end{abstract}{}

\vskip .4cm
\begin{flushleft}
    \noindent{\makebox[1in]\hrulefill}
\end{flushleft}
	2010 \textit{Mathematics Subject Classification.} 35K57, 47N60, 35B45, 35K55, 35K65, 35Q92; 
	\newline\textit{Keywords and phrases.} Inviscid Limit, Brinkman-to-Darcy Limit, Tissue Growth,\\ Parabolic-Hyperbolic Cross-Diffusion Systems;\\[-2.em]
\begin{flushright}
    \noindent{\makebox[1in]\hrulefill}
\end{flushright}
\vskip 1.5cm

\section{Introduction}
\subsection{A Beautiful Link between Local and Nonlocal Dispersal.}
In recent years, a beautiful link between \emph{nonlocal dispersal} and \emph{nonlinear, local dispersal} has been experiencing a renaissance. By nonlocal dispersal, we refer to dynamics governed by the equation
\begin{align}
    \label{eq:intro-nonloc}
    \partialt n = \nabla \cdot \prt*{n \nabla K_\epsilon\star n},
\end{align}
where the \emph{interaction kernel}, $K_\epsilon$, is a nonnegative, radial function with unit mass.
In the context of dispersal, it is important and commonly assumed that the kernel is, furthermore, strictly decreasing in the radial variable. 
Now, if the interaction kernel is scaled in such a way that the interactions become stronger and more localised, concurrently approaching a Dirac delta, \ie, $K_\epsilon \to \delta_0$, as $\epsilon\to0$, solutions to the nonlocal dispersion equation should, in a sense, converge to solutions of
\begin{align}
    \label{eq:intro-pme}
    \partialt n = \nabla \cdot \prt*{n \nabla \delta_0 \star n} = \nabla \cdot \prt*{n \nabla n}.
\end{align}
Of course, it is immediately clear that this limit is singular in the sense that the very nature of the equation changes from transport to degenerate parabolic. Both equations as well as the meta-problem of connecting both viewpoints have received considerable attention from different communities.

\subsection{Applications in Biomathematical Modelling.}
Nonlinear dispersal plays an important role in the evolution of large systems of interacting particles, \cite{NPT2010, CCY2019, BBPW2021}. 
The importance of nonlinear, local dispersal is particularly relevant in contexts of mathematical biology. While linear diffusion is used frequently in order to incorporate the random dispersal of particles, the tendency to use nonlinear diffusion for random dispersal represents a paradigm shift. It was ushered in and motivated by several independent biological experiments (see \cite{BT1983, Mor1950, Mor1954, Car1971}) that were suggesting that the diffusion coefficient depends on the local density, \ie, 
\begin{align*}
    \partialt n = \nabla \cdot(D(n) \nabla n),
\end{align*}
with $D(n) = n$, in contrast to linear diffusion, $D(n)=D$. Particularly in the context of tissue growth and multi-phase tumour modelling, a balance law reminiscent of a nonlinear Fisher-KPP equation, \ie,
\begin{align*}
    \partialt n + \nabla \cdot(n v) = n G(n),
\end{align*}
is used, where the velocity is related to the pressure gradient, $v = -\nabla p = -\nabla n$ and $G$ represents growth-death dynamics, most notably, \cf \cite{PQV14}, which acts as a starting point for this work.

\subsection{Aggregation Equation and Nonlocal Dispersal.}
We return to Eq.~\eqref{eq:intro-nonloc} which has received a lot of attention in its own right. It is customarily referred to as the \emph{aggregation equation}, \cf 
\cite{CCS2018, VS2015, ME1999, CDFLS2011}, and references therein. Using the method of matched asymptotic expansions, the authors of \cite{BCR17,  BBC2021} were able to systematically establish a link between Brownian particles with finite size, where the size-exclusion effects are incorporated by a very short-range, strong interaction potential, $W_\epsilon$, similarly to Eq.~\eqref{eq:intro-nonloc} and nonlinear diffusion similar to Eq.~\eqref{eq:intro-pme}. In a regime of low volume fraction and extremely localised repulsion, they showed that the probability density of a representative particle evolves according to a linear diffusion equation with a nonlinear correction of the form as in Eq.~\eqref{eq:intro-pme}. Indeed, systems of interacting particles with size-exclusion effects share one important feature, the presence of degenerate diffusion, as observed in various contexts, \cf \cite{BDPS2010, BC2012, BC2012a}.

\subsection{Analytical Approximations in the Literature.}
Concomitantly, the link between nonlocal dispersal and local nonlinear diffusion has also been addressed rigorously. Indeed, to the best of our knowledge, it dates back to a deterministic particle method for nonlinear diffusion equations introduced in \cite{DM1990}. An idea of the proof based on a double-mollification argument theory was given in \cite{LM2001}. Subsequently, fully rigorous proofs were given in \cite{CCP2019, BE2022}, for the passage from Eq.~\eqref{eq:intro-nonloc} to Eq.~\eqref{eq:intro-pme}, culminating in the very recent work by Carrillo, Esposito, and Wu \cite{CEW2023} that provides a full answer to nonlocal approximations of general degenerate diffusion equations. A remarkable fact about all of the aforementioned results is their technique which is based on an astute mollification argument. In its essence, it allows for a chain rule in the nonlocal equation mimicking the entropy dissipation structure of the local limit equation. 

Since the methods are based on techniques from optimal transportation they are unable to incorporate growth dynamics and the extension to multiple species ($i=1,2$) with joint population pressure, \ie, 
\begin{align}
    \label{eq:intro-multi-pme}
    \partialt n\i = \nabla \cdot (n\i \nabla p(n\1 + n\2)),
\end{align}
coupled through constitutive pressure law, $p$, is anything but straightforward. Moreover, any attempts to show existence of solutions (even in one dimension, leave alone multiple dimensions) using a nonlocal approximation have remained futile so far. We will address this gap in the literature in this work.

\subsection{Cross-Interaction Systems.}
Unlike other multi-species cross-diffusion systems that incorporate size exclusion effects that were recently introduced and considered, see \cite{ABFS2022, DEF2018, BDPS2010, Jun2015, BBP2017, ARSW2020, BCS2022}, System~\eqref{eq:intro-multi-pme} assumes a rather prominent place. This is due to the high symmetry in the velocity-pressure relation which renders the system non-strictly parabolic and is only convex rather than strictly convex. If convexity is completely lost, convexification arguments can be employed \cite{DSY2023}, however, the approximation might not converge to solutions of the original system. Returning to System~\eqref{eq:intro-multi-pme}, from the very early works, it has been reported, shown numerically, and proven analytically, that solutions may exhibit a propagation of segregation property. Thereupon, even arbitrarily smooth initial data may lose regularity at the onset of sharp interfaces between both densities. Formally speaking, this sharp interface is transported with the same velocity, $v=-\nabla p$, rendering the quest of obtaining regularity of the individual species, $n\i$,  a demanding challenge. Indeed, any regularity better than bounded variation cannot be expected, \cite{BHIMW2015, CFSS2017, BPPS2019}. A different approach to prove the existence of solutions relies on the strong compactness of the pressure gradient, either by variations of the Aronson-B\'enilan estimate \cite{BPPS2019, GPS2019, Jac2022} or convergence of the norm \cite{PX2020, LX2021, Dav2022, Jac2021}.

Historically, System \eqref{eq:intro-multi-pme}
first appeared in \cite{BT1983} in the context of epidemiological modelling of polymorphic populations and in \cite{GP1984} with applications to population dynamics that avoid overcrowding. It was subsequently analysed in \cite{BGHP1985, BGH1987, BGH1987a}. 
These early works were ensued by \cite{BDPM2010, BHIM2012} which study a variation of System \eqref{eq:intro-multi-pme} incorporating growth dynamics first in one and then in arbitrary spatial dimensions for tissues with cell-cell contact inhibitions. While the argument relied upon the introduction of transformed variables, a different regularisation approach was introduced in \cite{GS2014}. The models with growth and joint population pressure were applied and further scrutinised in \cite{MT2015, CMSTT2019} for human embryonic kidney cells and they are able to explain the phase separation of normal and abnormal tissue.
Following a diametrically different approach based on a splitting scheme, the authors of \cite{CFSS2017} were able to remove restrictive assumptions on the initial data, namely those of being bounded away from zero or strictly ordered in one dimension. Successively, assumptions on initial data and space dimension were removed, see \cite{GPS2019, BPPS2019, PX2020, LX2021, Dav2022}. 
Alongside these analytical advances and the growing understanding of the system, it is also worthwhile mentioning several results on travelling wave solutions, steady states, and pattern formation for variations of Eq. \eqref{eq:intro-multi-pme}, see \cite{BHIMW2015, CHS2017, BHIHMW2020, BCPS2020, GG2020}.  
To this day, the existence of solutions to Eq.~\eqref{eq:intro-multi-pme} with different diffusivity, apart from \cite{GP1984}, is widely open. Similarly, the inclusion of different drifts remains a challenging open question with two very partial results given in \cite{KM2018, CHS2017}, in one dimension and only for specific drifts or strong assumptions on the drifts. 

The interest in System \eqref{eq:intro-multi-pme} and the wealth of new methods for proving existence leads us to believe that our work connects pleasingly with the advances in the literature in the last few years: not only do we prove the existence of solutions, but we propose yet a new approach with the humble hope it may interest the community.

\subsection{Goal and Structure of this Paper.}

The starting point of our analysis is the model
\begin{align}
	\label{eq:Brinkman}
	\left\{
	\begin{array}{rl}
		\ds \partialt{n_\nu\i} - \div\prt*{n_\nu\i \grad W_\nu} \!\!\!& = \ds n_\nu\i G\i(n_\nu), \\[1em]
		\ds -\nu \Lap W_\nu +W_\nu \!\!\! & = \ds  n_\nu,
	\end{array}
	\right.
\end{align}
where $n\i = n\i(x,t)$, $i=1,2$, represents the normal (resp.\ abnormal) cell density  at location $x\in\R^d$ at time $t \in [0,T)$. Here, $n_\nu = n_\nu\1 + n_\nu\2$ denotes the total population, proposed in \cite{DS2020, DPSV2021} as a two-species version of \cite{PV2015}. The system is equipped with some non-negative initial data $n^{{(i)},\mathrm{in}} \in L^1(\R^d)\cap L^\infty(\R^d)$.
Here, $\nu>0$ is a fixed positive constant  representing a measure of the tissue's viscosity. The elliptic equation linking the macroscopic velocity potential, $W_\nu$, with the density, $n_\nu$, is typically referred to as \emph{Brinkman's law}, see for instance \cite{All91, PV2015}. The growth of the two densities is assumed to be modulated by density-dependent growth rates, $G^{(i)}$, \cite{BD09, RBEJPJ10}. Throughout, we shall make the following assumptions on the growth rate:\\
\begin{itemize}
	\setlength\itemsep{1em}
	\item[(G1)] regularity: $G^{(i)} \in C^{1} (\mathbb{R})$, for $ i = 1,2 $.
	\item[(G2)] monotonicity: $\partial_{n} G^{(i)}\leq - \alpha < 0$, for some $
			\alpha > 0$, for $ i = 1,2 $.
	\item[(G3)] critical densities: there exists $ \bar n>0$ such that $G^{(i)}(n) \leq 0, \ \forall n \geq \bar n$, and $ i = 1,2 $.\\[0.25em]
\end{itemize}
Formally, it is easy to observe that solutions to the Brinkman model, System \eqref{eq:Brinkman}, converge to solutions to the Darcy model
\begin{equation}
	\begin{aligned}
		\label{eq:Darcy}
		\ds \partialt{n_0\i} - \div\prt*{n_0\i \grad n_0} & = \ds n_0\i G\i(n_0), \quad i=1,2,
	\end{aligned}
\end{equation}
where $n_0\i = \lim_{\nu\to0} n_\nu\i$, and $n\0 = n\0\1+n\0\2$. Of course, this argument only holds on a formal level.  The challenge is immediately clear when directing our focus on the nature of the two models -- the Brinkman model is of transport type while the Darcy model is a degenerate parabolic equation. In a sense, solutions to the limit equation, System \eqref{eq:Darcy}, have higher regularity than solutions for $\nu>0$, and this gain of regularity at the limit classifies this question as a singular limit problem. 

It is the goal of this paper to establish the limit process rigorously. Before we state the precise statement of the main result let us briefly introduce the notion of weak solutions to the two equations.

\begin{definition}[Weak Solutions - Brinkman]
    \label{def:wk_sol_brinkman}
	We say the pair  $(n_\nu\1, n_\nu\2) \geq 0$ is a weak solution to System \eqref{eq:Brinkman}, if $n\i \in L^\infty(0,T; L^1(\Rd) \cap L^\infty(\Rd))$, for $i=1,2$, and there holds
	\begin{align*}
		\int_0^T\!\!\!  \int_\Rd &n_\nu\i \partialt \varphi \dx x \dx t 
        - \int_0^T \!\!\! \int_\Rd n_\nu\i \nabla W_\nu \cdot \nabla \varphi \dx x \dx t
		\\
        &= -\int_0^T \!\!\! \int_\Rd \varphi n_\nu\i G\i(n_\nu ) \dx x \dx t -\int_\Rd \varphi(x,0)n^{(i), \mathrm{in}}(x) \dx x ,
	\end{align*}
	for any test function $\varphi \in C_{c}^{\infty}(\Rd \times [0,T))$, as well as 
	\begin{align*}
		- \nu \Lap W_\nu + W_\nu = n_\nu,
	\end{align*}
    almost everywhere in $\Rd\times(0,T)$.
\end{definition}
Similarly, let us introduce our notion of weak solutions to the limiting equation.
\begin{definition}[Weak Solutions - Darcy]
    \label{def:wk_sol_darcy}
	We say a pair of nonnegative functions,  $(n\0\1, n\0\2)$ is a weak solution to System \eqref{eq:Darcy}, if $n_0\i \in L^\infty(0,T; L^1(\Rd)\cap L^\infty(\Rd))$ and $ n_0 \in L^2(0,T; H^1(\Rd))$ and it satisfies
	\begin{align*}
		\int_0^T \!\!\! \int_\Rd &n\0\i \partialt \varphi \dx x \dx t 
        - \int_0^T \!\!\! \int_\Rd n\0\i \nabla n\0 \cdot \nabla \varphi \dx x \dx t\\
        &=- \int_0^T \!\!\! \int_\Rd \varphi n\0\i G\i(n\0 ) \dx x \dx t - \int_\Rd \varphi(x,0) n\0^{(i), \mathrm{in}}(x) \dx x,
	\end{align*}
	for any test function $\varphi \in C_{c}^{\infty}(\Rd\times[0,T))$.
\end{definition}

In our analysis of the inviscid limit, we shall make use of the entropy structure of the primitive system and the limit systems, respectively. Let us introduce the usual entropy functional
\begin{equation*}
        \curlyH[f](t) := \int_\Rd f(x, t)(\log f(x, t) - 1) \dx x.
\end{equation*}
Finally, let us recall that any solution of Brinkman's equation can be expressed as convolution with the fundamental solution, denoted by $K_\nu$, \ie,  the solution of 
\begin{align*}
    - \nu \Delta W_\nu + W_\nu = n_\nu,
\end{align*}
can be expressed as
\begin{align*}
    W_\nu = K_\nu \star n_\nu.
\end{align*}
With these definitions at hand, we are now able to state the main result of this paper.

\begin{theorem}[Inviscid Limit]
\label{thm:inviscid_limit}
    Let $(n_\nu\i)_{\nu>0}$ be a sequence of solutions to System \eqref{eq:Brinkman} in the sense of Definition \ref{def:wk_sol_brinkman}. Suppose that the initial data $n_\nu^{\mathrm{in}}:=n_\nu^{(1),\mathrm{in}}+n_\nu^{(2),\mathrm{in}}$ satisfies
    \begin{equation*}
        \sup_{\nu>0}\mathcal{H}[n_\nu^{\mathrm{in}}] < \infty,\quad \sup_{\nu>0}\int_{\Rd}|x|^2n_\nu^{\mathrm{in}}\dx x < \infty,\quad \sup_{\nu>0}\int_{\Rd} n_\nu^{\mathrm{in}} K\star n_\nu^{\mathrm{in}}\dx x < \infty.
    \end{equation*}
    Then, there exists a subsequence, still denoted $(n_\nu\i)_\nu$ and functions $n_0\i\in~L^\infty(0,T; L^1(\Rd)\cap L^\infty(\Rd))$, such that
    \begin{align}
        n_\nu\i \rightharpoonup n\0\i,
    \end{align}
    weakly in $L^2(0,T; L^2(\Rd))$, as well as
    \begin{align}
        n_\nu \to n\0, 
    \end{align}
    with $n_0:= n_0\1 + n_0\2$,
    strongly in $L^2(0,T; L^2(\Rd))$, and
     \begin{align}
        W_\nu \to n\0, 
    \end{align}
    strongly in $L^2(0,T; H^1(\Rd))$. Furthermore, $(n_0\1, n_0\2)$ is a weak solution to System \eqref{eq:Darcy} in the sense of Definition \ref{def:wk_sol_darcy}.
\end{theorem}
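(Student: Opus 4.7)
My plan has three phases: uniform a priori estimates yielding an entropy--dissipation identity, compactness arguments, and an entropy-matching argument that promotes weak convergence of $\nabla W_\nu$ to strong and identifies the limit as a Darcy solution.

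For the a priori estimates, assumption (G3) together with a comparison argument along the characteristics of the sum equation (using $\|W_\nu\|_{L^\infty} \le \|n_\nu\|_{L^\infty}$, which follows from $\|K_\nu\|_{L^1}=1$) yields a uniform $L^\infty$ bound on $n_\nu$, and hence on each species. Mass is propagated up to exponential growth by testing with $1$, and second moments by testing with $|x|^2$. The pivotal estimate is the entropy identity for $n_\nu$: differentiating $\mathcal{H}[n_\nu(t)]$ in time produces the dissipation $-\int \nabla n_\nu \cdot \nabla W_\nu$, and substituting $\nabla n_\nu = \nabla W_\nu - \nu \nabla \Delta W_\nu$ via Brinkman's law followed by integration by parts yields two negative squares, giving
\begin{equation*}
\mathcal{H}[n_\nu(T)] + \int_0^T\!\!\! \int_{\Rd} |\nabla W_\nu|^2 \, dx\, dt + \nu \int_0^T\!\!\! \int_{\Rd} |\Delta W_\nu|^2 \, dx\, dt = \mathcal{H}[n_\nu^{\mathrm{in}}] + \int_0^T\!\!\! \int_{\Rd} (\log n_\nu) \sum_{i} n_\nu^{(i)} G^{(i)}(n_\nu) \, dx\, dt.
\end{equation*}
Using (G3), the $L^\infty$ bound, and the uniform initial entropy and second-moment bounds (the latter controlling $\mathcal{H}$ from below), the RHS is bounded uniformly in $\nu$, yielding uniform bounds on $\|\nabla W_\nu\|_{L^2(0,T;L^2)}$ and $\sqrt{\nu}\|\Delta W_\nu\|_{L^2(0,T;L^2)}$.

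For compactness, Banach--Alaoglu yields $n_\nu^{(i)} \rightharpoonup n_0^{(i)}$ weakly-$\star$ in $L^\infty_{t,x}$ and $\nabla W_\nu \rightharpoonup G$ weakly in $L^2$ along a subsequence. Brinkman's law gives $\|W_\nu - n_\nu\|_{L^2((0,T)\times\Rd)}^2 = \nu \cdot (\nu\|\Delta W_\nu\|_{L^2}^2) \le C\nu \to 0$, so $W_\nu$ and $n_\nu$ share any strong $L^2$-limit. For strong compactness of $W_\nu$, I would apply Aubin--Lions: $W_\nu$ is uniformly bounded in $L^2(0,T;H^1)$, and $\partial_t W_\nu = K_\nu \star \partial_t n_\nu$ is uniformly bounded in $L^2(0,T; H^{-1})$ (using $\|K_\nu\|_{L^1}=1$ and the distributional equation for $n_\nu$, whose RHS is controlled in $L^2(H^{-1})$ by the $L^\infty$ bound on $n_\nu$ and the gradient estimate on $W_\nu$). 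Tightness at infinity is supplied by the second-moment bound, so $W_\nu \to n_0$ strongly in $L^2(0,T;L^2)$, whence also $n_\nu \to n_0$ strongly with $n_0 = n_0^{(1)} + n_0^{(2)}$ and $G = \nabla n_0$.

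The main difficulty is passing to the limit in the cross-diffusion term $n_\nu^{(i)}\nabla W_\nu$ of each individual-species equation, which pairs two weakly convergent sequences. I would resolve this by entropy matching. First, pass to the limit in the sum equation via the weak--strong decomposition
\begin{equation*}
\int n_\nu \nabla W_\nu \cdot \nabla \phi \, dx = \int (n_\nu \nabla \phi) \cdot (\nabla W_\nu - \nabla n_0) \, dx + \int n_\nu \nabla n_0 \cdot \nabla \phi \, dx,
\end{equation*}
whose terms both converge since $n_\nu \nabla \phi \to n_0 \nabla \phi$ strongly in $L^2$. This identifies $n_0 \in L^2(0,T;H^1)$ as a weak solution of $\partial_t n_0 = \tfrac{1}{2}\Delta(n_0^2) + \sum_i n_0^{(i)} G^{(i)}(n_0)$. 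A truncation argument (testing with $\log(n_0+\delta)$ and sending $\delta \to 0$) delivers the Darcy entropy identity for $n_0$. Combining this with the Brinkman identity, using lower semi-continuity of $\mathcal{H}$ for the endpoint term and strong $L^2$ convergence of $n_\nu$ together with continuity of $G^{(i)}$ for the growth integral, gives $\limsup_\nu \int_0^T \|\nabla W_\nu\|_{L^2}^2 \le \int_0^T \|\nabla n_0\|_{L^2}^2$. Weak lower semi-continuity provides the reverse inequality, so the $L^2$-norms converge and $\nabla W_\nu \to \nabla n_0$ strongly in $L^2((0,T)\times\Rd)$. With this in hand, passage to the limit in each individual-species weak formulation is a routine weak--strong argument. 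The principal technical obstacles will be rigorously justifying the Darcy entropy identity (careful vacuum treatment via the $\delta$-truncation) and passing $\mathcal{H}[n_\nu^{\mathrm{in}}]$ to $\mathcal{H}[n_0^{\mathrm{in}}]$ in the limit, which may require additional strong convergence of the initial data, obtainable from the uniform entropy and moment bounds via de la Vall\'ee Poussin and Dunford--Pettis.
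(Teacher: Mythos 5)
Your proposal follows essentially the same architecture as the paper: entropy dissipation for the total density $n_\nu$ yielding a uniform $L^2$ bound on $\nabla W_\nu$; Aubin--Lions for $W_\nu$ combined with $\|n_\nu - W_\nu\|_{L^2}^2 \leq C\nu$ to get strong convergence of $n_\nu$; and an entropy-matching argument comparing the Brinkman entropy relation with the Darcy entropy equality to upgrade $\nabla W_\nu \rightharpoonup \nabla n_0$ to strong convergence. Two steps, however, are elided in a way that hides genuine work.

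First, the entropy relation for $n_\nu$. You write that ``differentiating $\mathcal{H}[n_\nu(t)]$ in time produces the dissipation $-\int \nabla n_\nu \cdot \nabla W_\nu$,'' but $n_\nu$ solves a pure transport equation and is only $L^1 \cap L^\infty$; $\nabla n_\nu$ is not a function, and the chain rule $\partial_t \mathcal{H}[n_\nu] = \int \log n_\nu\, \partial_t n_\nu$ is exactly the point at issue. The paper devotes its Section 2 to adding an artificial viscosity $\epsilon\Delta n_{\epsilon,\nu}$, proving $\epsilon$-uniform second-moment, entropy, and $\sqrt{\epsilon}\,\|\nabla n_{\epsilon,\nu}\|_{L^2}$ bounds, and passing $\epsilon \to 0$; the outcome is an entropy \emph{inequality} (a nonnegative Fisher-information-type term is discarded), not the identity you assert. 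The inequality is all that is needed for the $\limsup$ direction, so this does not derail the strategy, but as written your step is formal.

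Second, the limit of the reaction term $R(n_\nu^{(1)},n_\nu^{(2)},n_\nu) = \int\!\!\int \log n_\nu \sum_i n_\nu^{(i)} G^{(i)}(n_\nu)$. You claim this follows from ``strong $L^2$ convergence of $n_\nu$ together with continuity of $G^{(i)}$,'' but the integrand pairs the only \emph{weakly} convergent individual densities $n_\nu^{(i)}$ with $\log n_\nu$, which is unbounded near vacuum, so neither a weak-times-strong argument nor dominated convergence applies directly. The paper resolves this by tightness (from the second moments) plus Egorov's theorem: on a set where $n_\nu \geq \beta$ and converges uniformly, $\log n_\nu\, G^{(i)}(n_\nu)$ converges strongly and pairs with the weak limit of $n_\nu^{(i)}$, while the complementary regions (small mass, small measure, small density) contribute $O(\varepsilon + \beta|\log\beta|)$, using bounds of the form $x|\log x| \lesssim \sqrt{x} + x^2$. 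Your flag about vacuum treatment is aimed at the Darcy entropy identity, but the harder vacuum issue sits here. Finally, note that the comparison $\mathcal{H}[n_\nu^{\mathrm{in}}] \leq \mathcal{H}[n_0^{\mathrm{in}}]$ is taken as a well-preparedness assumption in the paper rather than derived; your remark that some additional convergence of the initial data is needed there is apt.
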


\section{A Priori Estimates}
We begin by summarising some fundamental regularity results for System \eqref{eq:Brinkman} that were obtained recently in a series of works, see \cite{PV2015, DS2020, DPSV2021}. Most notably, for any $\nu>0$, there is a unique solution $(n_\nu\1, n_\nu\2)$ to System \eqref{eq:Brinkman} satisfying:\\
\begin{itemize}
    \setlength\itemsep{1em}
	\item[(B1)] $n_\nu\i \in L^\infty(0,T; L^1(\Rd))$,  for $i=1,2$,
	\item[(B2)] $0 \leq n_\nu\i(x,t) \leq \bar n$, almost everywhere, with $i=1,2$,
	\item[(B3)] $W_\nu\in L^\infty(0,T; L^1(\Rd) \cap L^\infty(\Rd))$,\\[0.25em]
 \end{itemize}
uniformly in $\nu>0$. Moreover, the potential, $W_\nu$, enjoys the following regularity properties:\\
 \begin{itemize}
    \setlength\itemsep{1em}
    \item[({\crtcrossreflabel{B4}[hyp:B4]})] $W_\nu\in L^\infty(0,T; W^{1,q}(\Rd))$, for $1\leq q\leq \infty$,
    \item[({\crtcrossreflabel{B5}[hyp:B5]})] $D^2 W_\nu\in L^\infty(0,T; L^q(\Rd)$, for $1< q <\infty$,\\[0.25em]
\end{itemize}
for any given $\nu>0$. However, let us stress that the last two statements are not claimed to hold uniformly in $\nu$. Indeed, it is a substantial part of this work to establish~(\ref{hyp:B4}) uniformly in $\nu$, at least in $L^2(0,T; H^1(\Rd))$. Moreover, let us point out that~(\ref{hyp:B5}) cannot be expected to hold true as, in the limit, $\Delta n_0$ is only a measure in general.

\subsection{Entropy Dissipation Inequality.}
This section is dedicated to establishing an entropy dissipation inequality for the joint population density. Indeed, for a solution, $(n_\nu\1, n_\nu\2)$, to System \eqref{eq:Brinkman}, we can show that $n_\nu = n_\nu\1 + n_\nu\2$ satisfies
\begin{align*}
    \curlyH[n_\nu](T) - \curlyH[n_\nu^{\mathrm{in}}]
    & - \int_0^T \!\!\! \int_\Rd  n_\nu \Lap W_\nu  \dx x \dx t\\
    & \leq \int_0^T \!\!\! \int_\Rd \log n_\nu \left[ n^{(1)}_{\nu}G^{(1)}(n_\nu) +n^{(2)}_{\nu}G^{(2)}(n_\nu) \right] \dx x \dx t.    
\end{align*}
In order to establish this inequality rigorously, let us begin by regularising the equations satisfied by the two densities, $n\i_\nu$. Specifically, for any $\epsilon>0$, we consider
\begin{align}
    \label{eq:regularised-equation_i}
	\left\{
	\begin{array}{rll}
		\ds \partialt {n\i\epsnu} - \epsilon \Lap n\i\epsnu \!\!\! & = \nabla \cdot
		(n\i\epsnu \nabla W_\nu) + n\i\epsnu  G\i(n_\nu)  & \text{in } \Rd\times(0,T), \\[0.75em]
		n\i\epsnu(x,0) \!\!\!        & =
		n\epsnu^{(i),\mathrm{in}}(x), &
	\end{array}
	\right.
\end{align}
for $i=1,2$, where $n_\nu$ and $W_\nu$ are the unique solutions to System~\eqref{eq:Brinkman}, and $n\epsnu^\mathrm{(i),in} \in C_c^\infty(\Rd)$ denotes some nonnegative regularised initial data satisfying $n\epsnu^\mathrm{(i),in} \to n_\nu^\mathrm{(i),in}$ in $L^1(\Rd)$ and almost everywhere in $\Rd$.

The existence of a pair of unique $L^2(0,T;H^1(\Rd))$-solutions $(n\1\epsnu,n\2\epsnu)$ is folklore and\\
\begin{itemize}
    \setlength\itemsep{1em}
    \item [(i)] $ n\i\epsnu \in L^{\infty}(0,T;L^1(\Rd)\cap L^\infty(\Rd)) \cap L^{2}(0,T;H^{1}(\Rd))$,
    \item [(ii)] $\partial_{t} n\i\epsnu \in  L^2(0,T;L^{2}(\Rd))$.\\
\end{itemize}
We will now discuss some important properties of the total regularised density $n\epsnu := n\1\epsnu+n\2\epsnu$, which satisfies the equation
\begin{align}
    \label{eq:regularised-equation}
	\left\{
	\begin{array}{rll}
		\ds \partialt {n\epsnu} - \epsilon \Lap n\epsnu \!\!\! & = \nabla \cdot
		(n\epsnu \nabla W_\nu) + n\1\epsnu  G\1(n_\nu) +  n\2\epsnu G\2(n_\nu),            & \text{in } \Rd\times(0,T), \\[0.75em]
		n\epsnu(x,0) \!\!\!        & =
		n\epsnu^{\mathrm{in}}(x):=n\epsnu^{(1),\mathrm{in}}(x)+n\epsnu^{(2),\mathrm{in}}(x). &
	\end{array}
	\right.
\end{align}

\begin{lemma}
    \label{lem:second_moment}
	Let $n\epsnu$ be the solution to Eq.~\eqref{eq:regularised-equation}. Assume that there is a constant, $C>0$, such that 
\begin{align}
    \sup_{\epsilon >0} \int_\Rd n\epsnu^{\mathrm{in}}(x) |x|^2 \dx x \leq C,
\end{align}
\ie, the initial data has finite second moment. Then, the second moment remains bounded and there holds
\begin{align*}
	    \sup_{t\in [0,T]}\int_\Rd n\epsnu(x,t) \abs{x}^{2}  \dx x\leq C + C \int_0^T\!\!\!\int_\Rd n\epsnu |\nabla W_\nu|^2 \dx x \dx t =:C(\nu),
\end{align*}
for some constant $C(\nu)>0$, independent of $\epsilon$.
\end{lemma}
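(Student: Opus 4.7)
The statement is a standard second-moment energy estimate for the regularised parabolic equation \eqref{eq:regularised-equation}. I would set $M_2(t):=\int_{\Rd} n\epsnu(x,t)|x|^2\dx x$ and, after a preliminary uniform $L^1$ bound, test the equation with $|x|^2$, handling the non-compact support by inserting a smooth cutoff $\phi_R(x):=\chi(|x|/R)|x|^2$, with $\chi\in C_c^\infty([0,\infty))$ equal to one on $[0,1]$ and supported in $[0,2]$. The regularity (i)--(ii) of $n\i\epsnu$ justifies all integrations by parts. The uniform $L^1$ bound itself follows by integrating the equation over $\Rd$ and using the fact that monotonicity (G2) together with (G3) implies the existence of a constant $G_M>0$ with $G\i(n)\leq G_M$ on $[0,\bar n]$; then $\frac{\dx}{\dx t}\|n\epsnu\|_{L^1}\leq G_M \|n\epsnu\|_{L^1}$ and Gronwall gives $\|n\epsnu(\cdot,t)\|_{L^1}\leq C$, uniformly in $\epsilon$.

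Testing with $\phi_R$ produces, after integration by parts,
\begin{align*}
  \ddt \int_\Rd n\epsnu \phi_R \dx x
  &= \epsilon \int_\Rd n\epsnu \Lap\phi_R \dx x
  - \int_\Rd n\epsnu \nabla W_\nu\cdot\nabla\phi_R \dx x \\
  &\qquad + \int_\Rd \bigl[n\1\epsnu G\1(n_\nu)+n\2\epsnu G\2(n_\nu)\bigr]\phi_R\dx x.
\end{align*}
Since $|\Lap\phi_R|\leq C$ uniformly in $R$, the diffusive term is bounded by $2d\epsilon\|n\epsnu\|_{L^1}$. The drift term is controlled via Young's inequality, $|n\epsnu\nabla W_\nu\cdot\nabla\phi_R|\leq n\epsnu|x|^2\chi(|x|/R)+n\epsnu|\nabla W_\nu|^2+\tfrac{C}{R}n\epsnu|\nabla W_\nu||x|^2|\chi'(|x|/R)|$, where the last term vanishes as $R\to\infty$ by dominated convergence. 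The reaction term is bounded by $2G_M\int n\epsnu\phi_R\dx x$. Passing $R\to\infty$ via monotone convergence on the left and on the reaction term and dominated convergence on the drift yields
\begin{equation*}
  \ddt M_2(t) \leq 2d\epsilon \|n\epsnu\|_{L^1} + (1+2G_M) M_2(t) + \int_\Rd n\epsnu|\nabla W_\nu|^2\dx x,
\end{equation*}
and Gronwall's inequality, integrated over $[0,T]$, delivers the claimed bound with the integral $\int_0^T\!\!\int_\Rd n\epsnu|\nabla W_\nu|^2$ as forcing.

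\textbf{Main obstacle.} The delicate point is the rigorous justification of the $R\to\infty$ passage in the drift term, which requires \emph{a priori} finiteness of $M_2(t)$ in order to invoke Young's inequality freely. One way around this circularity is to first prove the differential inequality with $\phi_R$ in place of $|x|^2$ and obtain a bound on $M_{2,R}(t):=\int n\epsnu\phi_R\dx x$ that is uniform in $R$ (noting that $\phi_R\leq |x|^2$ so the initial datum contribution is controlled by the assumed finite second moment of $n\epsnu^{\mathrm{in}}\in C_c^\infty(\Rd)$), and then apply Fatou's lemma to recover $M_2(t)$. Once $M_2(t)<\infty$ is secured for each $t$, dominated convergence on the drift and reaction terms closes the argument without further subtlety.
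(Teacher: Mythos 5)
Your proposal is correct and follows essentially the same route as the paper: test the equation with $|x|^2$, control the drift term $\int n\epsnu\, x\cdot\nabla W_\nu$ by Young's inequality against $\frac12\int n\epsnu|x|^2 + \frac12\int n\epsnu|\nabla W_\nu|^2$, bound the reaction via the $L^\infty$ bound on $G\i$ over $[0,\bar n]$, and close with Gronwall. The only difference is that the paper performs this computation formally with $|x|^2$ directly, whereas you add the cutoff $\phi_R$ and the Fatou argument to justify the non-compactly-supported test function — a layer of rigor the paper silently omits.
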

\begin{proof}
	We compute
	\begin{align*}
		\frac{1}{2}  \ddt \int_\Rd n\epsnu \abs{x}^{2} \dx x
		 & =  \epsilon d \int_\Rd n\epsnu \dx x + \int_\Rd n\epsnu x \cdot \nabla
		W_\nu \dx x \\
        &\qquad +  \frac12 \int_\Rd |x|^2 \prt*{
		n\1\epsnu G\1(n_\nu) + n\2\epsnu G\2(n_\nu)} \dx x      \\[0.3em]
		& \leq \epsilon d \norm{n\epsnu}_{L^\infty(0,T; L^1(\Rd))} +
		\prt*{\frac12 + \max_{i=1,2}\norm{G^{(i)}}_{L^\infty(0,\bar n)}}\int_\Rd n\epsnu
		|x|^2\dx x \\
		 & \qquad + \frac12 \int_\Rd n\epsnu |\nabla W_\nu|^2 \dx x\\[0.3em]
		& \leq C_1 + \frac12\int_\Rd n\epsnu |\nabla W_\nu|^2 \dx x + \frac{C_2}{2} \int_\Rd  n\epsnu \abs{x}^{2} \dx x.
	\end{align*}
	Then, by Gronwall's lemma, we get
	\begin{align*}
        \frac12 &\int_\Rd \abs{x}^{2} n\epsnu (x,t) \dx x\\
        &\leq 
        \brk*{\frac12
        \int_\Rd \abs{x}^{2} n\epsnu^{\text{in}}(x) \dx x + C_1 T +
        \frac12 \int_0^T \!\!\! \int_\Rd  n\epsnu \abs{\grad W_\nu}^{2}
        \dx x\dx t
        }  \exp{(C_{2}T)}.
	\end{align*}
This completes the proof.
\end{proof}
Let us point out that the above lemma implies also uniform second moment control for the individual species, $n\i\epsnu$, $i=1,2$.

\begin{lemma}[Entropy Bounds]
    \label{lem:entropy-bounds}
    Let $n\epsnu$ be the solution to Eq. \eqref{eq:regularised-equation}. Then there holds
    \begin{align*}
        \sup_{\epsilon > 0} \sup_{t\in[0,T]} \int_\Rd n\epsnu (t) |\log n\epsnu (t)| \dx x \leq C(\nu),
    \end{align*}
    for some constant, $C(\nu)>0$, independent of $\epsilon$.
\end{lemma}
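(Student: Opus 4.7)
The plan is to decompose $n\epsnu \abs{\log n\epsnu}$ into the contribution from $\{n\epsnu \geq 1\}$, where $\log n\epsnu \geq 0$, and from $\{n\epsnu < 1\}$, where $\log n\epsnu < 0$, and to control each separately. The ingredients are the two a priori bounds already at our disposal for the regularised system: a uniform-in-$\epsilon$ bound in $L^\infty$, and the uniform second-moment estimate from Lemma~\ref{lem:second_moment}. No entropy-dissipation identity is needed for this step -- that will come later, once $\curlyH[n\epsnu]$ is known to be well-defined.

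For the positive part, I would first establish $\|n\i\epsnu\|_{L^\infty(\Rd\times(0,T))}\leq C(\nu)$ uniformly in $\epsilon$. Expanding the drift, Eq.~\eqref{eq:regularised-equation_i} becomes
\begin{equation*}
\partial_t n\i\epsnu - \epsilon \Lap n\i\epsnu - \grad W_\nu \cdot \grad n\i\epsnu = \brk*{\Lap W_\nu + G\i(n_\nu)} n\i\epsnu,
\end{equation*}
a linear parabolic equation whose zeroth-order coefficient is bounded uniformly in $\epsilon$: Brinkman's law gives $\Lap W_\nu = \nu^{-1}(W_\nu - n_\nu) \in L^\infty$ through (B2)--(B3), while $G\i(n_\nu)$ is bounded by (G1) and (B2). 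Applying the maximum principle to $e^{-Ct}n\i\epsnu$ for $C$ large enough yields the $L^\infty$ bound with a constant independent of $\epsilon$, since the regularised initial data may be chosen so that $\|n\epsnu^{(i),\mathrm{in}}\|_\infty\leq \bar n$. Combined with the uniform $L^1$ estimate (B1), this controls $\int_\Rd n\epsnu \log n\epsnu \dx x$ on the set where $n\epsnu\geq 1$.

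For the negative part, I would use the standard Gaussian weight trick. The elementary inequality $s\log(M/s)\leq M-s$, valid for $0<s\leq M$, together with the decomposition $s\log(1/s) = s\log(M/s) + s\log(1/M)$ and the choice $M(x) = e^{-\abs{x}^2}$, yields, at every $(x,t)$ with $n\epsnu(x,t) < 1$,
\begin{equation*}
n\epsnu\abs{\log n\epsnu} \leq e^{-\abs{x}^2} + n\epsnu\abs{x}^2.
\end{equation*}
Integrating in space and invoking Lemma~\ref{lem:second_moment} then furnishes the desired control, which combined with the previous step closes the argument.

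The main (mild) obstacle is to make sure the $L^\infty$ bound is genuinely uniform in $\epsilon$; this hinges on the fact that $W_\nu$ is determined by $n_\nu$ alone and is therefore unaffected by the regularisation, so that the coefficients of the linear equation for $n\i\epsnu$ are $\epsilon$-independent. The dependence of the final constant on $\nu$, and in particular on $\nu^{-1}$ through $\Lap W_\nu$, is consistent with the statement of the lemma.
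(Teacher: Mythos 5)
Your proof is correct and follows essentially the same route as the paper: split the integral at the level set $\set{n\epsnu=1}$, control the positive part by the $L^1\cap L^\infty$ bounds, and control the negative part by comparison with a Gaussian weight together with the second-moment estimate of Lemma~\ref{lem:second_moment}. The only differences are minor: the paper runs the Gaussian comparison through Jensen's inequality (yielding the constant $e^{-1}$) where you use the pointwise inequality $s\log(M/s)\leq M-s$, and you additionally justify the uniform-in-$\epsilon$ $L^\infty$ bound via the maximum principle, a point the paper takes for granted from the stated regularity properties of the regularised system.
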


\begin{proof}
Let us consider
\begin{align*}
    \int_\Rd n\epsnu \abs{\log n\epsnu} \dx x 
    &= \int_{\set{n\epsnu \geq 1}} n\epsnu \log n\epsnu \dx x - \int_{\set{n\epsnu < 1}} n\epsnu \log n\epsnu \dx x\\[0.5em]
    &\leq \norm{n\epsnu}_{L^\infty(0,T; L^\infty(\Rd))} \norm{n\epsnu}_{L^\infty(0,T;L^1(\Rd))}  + J,
\end{align*}
where 
\begin{align*}
    J := - \int_{\set{n\epsnu < 1}} n\epsnu \log n\epsnu \dx x.
\end{align*}
In order to estimate $J$, let $N(x)$ denote the standard normal Gaussian. Then, we have 
\begin{align*}
    J &= - \int_{\set{n\epsnu < 1}} n\epsnu \log n\epsnu \dx x \\
    &= - \int_\Rd \frac{n\epsnu \indicator_{\set{n\epsnu < 1}}}{N} \log\prt*{\frac{n\epsnu \indicator_{\set{n\epsnu < 1}} }{N}} N \dx x  + \int_\Rd n\epsnu \indicator_{\set{n\epsnu < 1}} |x|^2 \dx x\\
    &\leq - \int_\Rd \frac{n\epsnu \indicator_{\set{n\epsnu < 1}}}{N} \log\prt*{\frac{n\epsnu \indicator_{\set{n\epsnu < 1}} }{N}} N \dx x + C(\nu),
\end{align*}
having used the second-order moment bound from Lemma \ref{lem:second_moment}. Applying Jensen's inequality to the first term, we observe
\begin{align*}
    - \int_\Rd & \frac{n\epsnu \indicator_{\set{n\epsnu < 1}}}{N} \log\prt*{\frac{n\epsnu \indicator_{\set{n\epsnu < 1}} }{N}} N \dx x\\
    &\leq -\prt*{\int_\Rd \frac{n\epsnu \indicator_{\set{n\epsnu < 1}} }{N} N \dx x} \log \prt*{\int_\Rd \frac{n\epsnu \indicator_{\set{n\epsnu < 1}} }{N} N \dx x}\\
    & \leq e^{-1}.
\end{align*}
In conclusion, we obtain $J \leq C(\nu) + e^{-1}$, whence
\begin{align*}
    \int_\Rd n\epsnu |\log n\epsnu| \dx x \leq \norm{n\epsnu}_{L^\infty(0,T; L^\infty(\Rd))} \norm{n\epsnu}_{L^\infty(0,T;L^1(\Rd))}  + C(\nu) + e^{-1},
\end{align*}
which concludes the proof.
\end{proof}

\begin{proposition}
    \label{prop:regular_n_bounds}
	Let $(n\epsnu)_{\epsilon>0}$ be a family of solutions to Eq. \eqref{eq:regularised-equation}. Then, there holds
\begin{align*}
    \sqrt{\epsilon} \|\nabla n\epsnu\|_{L^2(0,T; L^2(\Rd))} \leq C,
\end{align*}
for some constant $C>0$ independent of $\epsilon$.
\end{proposition}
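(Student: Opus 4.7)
The plan is to run a standard $L^2$-energy estimate on Eq.~\eqref{eq:regularised-equation}, exploiting the fact that for fixed $\nu>0$ Brinkman's law supplies enough $L^\infty$-control on $\Lap W_\nu$ to close the estimate uniformly in $\epsilon$. Concretely, I would multiply Eq.~\eqref{eq:regularised-equation} by $n\epsnu$ and integrate over $\Rd$. Integrating by parts twice recasts the drift contribution as
\begin{equation*}
    \int_\Rd n\epsnu \nabla\cdot\prt*{n\epsnu \nabla W_\nu} \dx x = -\tfrac12\int_\Rd \nabla(n\epsnu^2)\cdot\nabla W_\nu\dx x = \tfrac12\int_\Rd n\epsnu^2 \Lap W_\nu \dx x,
\end{equation*}
while the diffusive term yields $+\epsilon\|\nabla n\epsnu\|_{L^2}^2$. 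This gives the identity
\begin{equation*}
    \tfrac12 \ddt \|n\epsnu\|_{L^2}^2 + \epsilon\|\nabla n\epsnu\|_{L^2}^2 = \tfrac12\int_\Rd n\epsnu^2 \Lap W_\nu \dx x + \int_\Rd n\epsnu\brk*{n\1\epsnu G\1(n_\nu) + n\2\epsnu G\2(n_\nu)} \dx x.
\end{equation*}

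Next, I would bound the right-hand side using the Brinkman relation $\Lap W_\nu = \nu^{-1}(W_\nu - n_\nu)$ together with the uniform $L^\infty$ bounds (B2) and (B3) on $n_\nu$ and $W_\nu$; these yield $\|\Lap W_\nu\|_{L^\infty(\Rd\times(0,T))} \leq C/\nu$. The growth term is controlled by $\max_i\|G\i\|_{L^\infty(0,\bar n)} \|n\epsnu\|_{L^2}^2$, using $n\i\epsnu\leq n\epsnu$ and the continuity of $G\i$ on the range $[0,\bar n]$ of $n_\nu$. Together these produce a differential inequality of the form
\begin{equation*}
    \ddt\|n\epsnu\|_{L^2}^2 + 2\epsilon \|\nabla n\epsnu\|_{L^2}^2 \leq C(\nu)\|n\epsnu\|_{L^2}^2,
\end{equation*}
which Gronwall's inequality closes in terms of $\|n\epsnu^{\mathrm{in}}\|_{L^2}$. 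The latter can be arranged to be uniformly bounded in $\epsilon$ by the interpolation $\|n\epsnu^{\mathrm{in}}\|_{L^2}^2\leq\|n\epsnu^{\mathrm{in}}\|_{L^1}\|n\epsnu^{\mathrm{in}}\|_{L^\infty}$ and a standard choice of mollifier for the regularised initial datum preserving the $L^1\cap L^\infty$ bound of $n_\nu^{\mathrm{in}}$. Integrating the differential inequality in time then delivers the claimed $\sqrt\epsilon\,\|\nabla n\epsnu\|_{L^2(0,T;L^2(\Rd))}\leq C$.

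There is no serious obstacle in this step; the computation is entirely standard. The only point worth emphasising is that the constant $C$ is allowed to depend on $\nu$ -- indeed, it must, since $\|\Lap W_\nu\|_{L^\infty}$ unavoidably deteriorates as $\nu\to 0$. This is consistent with the role of the proposition in the wider argument: the vanishing-viscosity passage $\epsilon\to 0$ is to be performed at fixed $\nu>0$ in order to recover System~\eqref{eq:Brinkman}, whereas the singular inviscid passage $\nu\to 0$ is reserved for subsequent, considerably more delicate estimates.
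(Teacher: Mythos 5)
Your proposal is correct and follows essentially the same route as the paper: test Eq.~\eqref{eq:regularised-equation} with $n\epsnu$, integrate by parts to turn the drift term into $\tfrac12\int n\epsnu^2\Lap W_\nu$, absorb it and the reaction term into $C(\nu)\|n\epsnu\|_{L^2}^2$, and conclude by Gronwall. The only (cosmetic) difference is that the paper invokes the regularity statement~(\ref{hyp:B5}) for $\Lap W_\nu$ at fixed $\nu$, whereas you obtain the needed $L^\infty$ control directly from Brinkman's law as $\norm{\Lap W_\nu}_{L^\infty}\leq C/\nu$ -- which is, if anything, slightly more explicit about why the constant may degenerate as $\nu\to0$ while remaining independent of $\epsilon$.
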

\begin{proof}
	Testing Eq. \eqref{eq:regularised-equation} by $n_{\epsilon,\nu}$ and integrating by parts, we obtain
	\begin{align*}
        \frac12 \dfrac{\dx }{\dx t} &\int_\Rd
        n\epsnu^2  \dx x + \epsilon \int_\Rd \abs{\grad
        n\epsnu}^{2} \dx x\\
        & =  \frac12 \int_\Rd 
        n\epsnu^2 \Lap W_\nu \dx x + \int_{\Rd}^{} n\epsnu
        \prt*{n\1\epsnu G^{(1)}(n_\nu) + n\2\epsnu G^{(2)}(n_\nu)} \dx x\\
        &\leq C\int_{\Rd} n\epsnu^2\dx{x},
	\end{align*}
	using~(\ref{hyp:B5}). An application of Gronwall's lemma yields the statement.
\end{proof}
\begin{proposition}[Compactness of Solutions to the Regularised Equation]
    \label{prop:n_epsnu_compact}
	The families $ (n\1\epsnu)_{\epsilon > 0}, (n\2\epsnu)_{\epsilon > 0},(n\epsnu)_{\epsilon > 0}  $ are compact in $L^1(0,T;L^{1}(\Rd))$.
\end{proposition}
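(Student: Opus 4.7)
I would prove Proposition \ref{prop:n_epsnu_compact} through the Fréchet--Kolmogorov--Riesz criterion in $L^1$, verifying uniformly in $\epsilon$ the three standard ingredients: tightness at infinity, temporal equicontinuity, and spatial equicontinuity. Compactness of the sum $(n\epsnu)_\epsilon$ then follows from that of the two species by linearity.

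Tightness is an immediate corollary of Lemma \ref{lem:second_moment}: Chebyshev's inequality yields
\begin{equation*}
\sup_{\epsilon>0} \int_0^T\!\!\int_{\set*{|x|>R}} n\i\epsnu \dx x \dx t \leq \frac{T \, C(\nu)}{R^2} \longrightarrow 0, \quad R\to\infty,
\end{equation*}
reducing the problem to relative compactness on each cylinder $(0,T)\times B_R$. For temporal equicontinuity, I test the regularised equation \eqref{eq:regularised-equation_i} against $\varphi \in H^1(\Rd)$ and estimate the three resulting contributions: $\epsilon \int \grad n\i\epsnu \cdot \grad\varphi$ is $L^2(0,T)$-bounded by $\sqrt\epsilon$ times a uniform constant thanks to Proposition \ref{prop:regular_n_bounds}; the transport term $\int n\i\epsnu\, \grad W_\nu \cdot \grad\varphi$ is controlled by the uniform $L^\infty$-bound (B2) combined with (\ref{hyp:B4}); and the source term is bounded via (G1)--(G3) and (B2). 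Hence $\partial_t n\i\epsnu$ is uniformly bounded in $L^2(0,T; H^{-1}(\Rd))$, which after interpolation with the $L^\infty$-bound (B2) and the tightness of Step 1 yields equicontinuity in time in $L^1_{\mathrm{loc}}$.

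Spatial equicontinuity is the critical step, since Proposition \ref{prop:regular_n_bounds} supplies only the degenerating bound $\sqrt\epsilon\,\grad n\i\epsnu \in L^2$ and no uniform Sobolev estimate on the density is available. I would instead extract the modulus of continuity from the smoothness of the drift by a Kružkov-type $L^1$-contraction argument applied to $u := \tau_h n\i\epsnu - n\i\epsnu$, with $\tau_h f(x) := f(x+h)$. Subtracting the equations satisfied by $\tau_h n\i\epsnu$ and $n\i\epsnu$ produces
\begin{equation*}
\partial_t u - \epsilon \Lap u = \div\bigl(u\,\grad W_\nu\bigr) + \div\bigl(\tau_h n\i\epsnu\,(\tau_h\grad W_\nu - \grad W_\nu)\bigr) + r_h,
\end{equation*}
where $r_h$ gathers the source differences, controlled by $C(\nu)(|u| + \tau_h n\i\epsnu |\tau_h n_\nu - n_\nu|)$. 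Multiplying by a smooth convex approximation $\varphi_\delta$ of $|\cdot|$ with $\varphi''_\delta \geq 0$, integrating by parts, and using that the first divergence term vanishes in the limit $\delta\to 0$ by a standard renormalisation, while the gradient-of-$u$ contribution from the displacement term is absorbed through Cauchy--Schwarz into the parabolic dissipation $\epsilon\,\varphi''_\delta(u)|\grad u|^2$, Gronwall's lemma yields
\begin{equation*}
\sup_{t \in [0,T]} \norm{\tau_h n\i\epsnu(t) - n\i\epsnu(t)}_{L^1(\Rd)} \leq e^{C(\nu)T}\bigl(\norm{\tau_h n\epsnu^{(i),\mathrm{in}} - n\epsnu^{(i),\mathrm{in}}}_{L^1(\Rd)} + C(\nu) T\,\norm{\tau_h\grad W_\nu - \grad W_\nu}_{L^\infty(\Rd)}\bigr)
\end{equation*}
uniformly in $\epsilon$. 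Hypothesis (\ref{hyp:B5}) with $q > d$ and the embedding $W^{1,q}(\Rd) \hookrightarrow C^{0,1-d/q}(\Rd)$ give $\norm{\tau_h\grad W_\nu - \grad W_\nu}_{L^\infty} \leq C(\nu)\,|h|^{1-d/q}$, and the initial-data contribution vanishes as $h\to 0$ by the construction of the regularisation.

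The three estimates combined via Fréchet--Kolmogorov yield relative compactness of $(n\i\epsnu)_{\epsilon>0}$ in $L^1(0,T; L^1(\Rd))$. The principal technical difficulty lies in the Kružkov step: since the Cauchy--Schwarz absorption of the cross-term $-\int \varphi''_\delta(u)\,\grad u \cdot \tau_h n\i\epsnu\,(\tau_h\grad W_\nu - \grad W_\nu)$ into the parabolic dissipation introduces a $\delta^{-1}$-factor from $\varphi''_\delta$, the $L^\infty$-smallness of the drift difference must be leveraged \emph{before} passing $\delta\to 0$. This is precisely where the Hölder regularity of the Brinkman potential, supplied by (\ref{hyp:B5}), plays an indispensable role in converting drift smoothness into a uniform-in-$\epsilon$ spatial modulus of continuity for the density.
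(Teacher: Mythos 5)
Your overall architecture (tightness from the second moment bound, time equicontinuity from a uniform bound on $\partial_t n\i\epsnu$ in $L^2(0,T;H^{-1}(\Rd))$, plus a spatial modulus of continuity) has the right shape, and the first two ingredients coincide with the paper's. The gap is in the spatial equicontinuity step, and it is not the technicality you flag but an obstruction that defeats the method. Testing the equation for $u=\tau_h n\i\epsnu-n\i\epsnu$ with $\varphi_\delta'(u)$ produces the commutator term
\begin{equation*}
-\int_{\Rd} \varphi_\delta''(u)\,\nabla u\cdot \tau_h n\i\epsnu\,\bigl(\tau_h\nabla W_\nu-\nabla W_\nu\bigr)\dx x,
\end{equation*}
which, unlike the term $\nabla\cdot(u\nabla W_\nu)$, is not a perfect derivative of a function of $u$ and therefore does not renormalise away. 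Absorbing it into the dissipation $\epsilon\int\varphi_\delta''(u)|\nabla u|^2$ by Cauchy--Schwarz leaves a remainder of order $\epsilon^{-1}\delta^{-1}\norm{\tau_h\nabla W_\nu-\nabla W_\nu}_{L^\infty}^2$, and no ordering of the limits $\delta\to0$, $h\to0$ removes the factor $\epsilon^{-1}$; undoing the integration by parts instead yields $\int\varphi_\delta'(u)\,(\tau_h\nabla W_\nu-\nabla W_\nu)\cdot\nabla\tau_h n\i\epsnu$, which requires a uniform-in-$\epsilon$ integrability of $\nabla n\i\epsnu$ --- exactly the spatial regularity you are trying to establish, while Proposition~\ref{prop:regular_n_bounds} only gives $\norm{\nabla n\i\epsnu}_{L^2}\lesssim\epsilon^{-1/2}$. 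So the claimed uniform translation estimate does not follow from the argument as written; nor should it be expected, since an $O(1)$ $L^1$-translation modulus is essentially a Lipschitz/Osgood-drift phenomenon, whereas~(\ref{hyp:B5}) excludes $q=\infty$ and only yields H\"older continuity of $\nabla W_\nu$.

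This is precisely the obstruction the paper sidesteps by invoking the Belgacem--Jabin criterion: one monitors $Q_\epsilon(t)=\iint K_h(x-y)\abs{n\i\epsnu(x)-n\i\epsnu(y)}\dx x\dx y$ for logarithmically normalised kernels and needs only $\lim_{h\to0}\limsup_{\epsilon\to0}\abs{\log h}^{-1}Q_\epsilon=0$, a far weaker requirement than a genuine modulus of continuity. The kernel property $|x||\nabla K_h(x)|\leq CK_h(x)$ converts the velocity increment into a factor controlled by $K_h$ itself without ever differentiating $n\i\epsnu$, the viscous contribution is harmless because $\epsilon\to0$ is taken before $h\to0$, and --- since here the velocity field is fixed and independent of $\epsilon$ --- the only term requiring care is the reaction, which closes by Gronwall at the level of $Q_\epsilon$ plus a vanishing term involving the fixed function $n_\nu\in L^\infty(0,T;L^1(\Rd))$. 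Your treatment of the reaction remainder and of the term $\nabla\cdot(u\nabla W_\nu)$ would transplant without difficulty into that framework, but Step~3 as proposed must be replaced by the logarithmic-kernel (or an equivalent quantitative Lagrangian) argument; as written, the proof does not go through.
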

\begin{proof}
    We will prove compactness for the individual densities, $n\i\epsnu$.
    It is easy to see that $\partial_t n\i\epsnu$ is bounded uniformly (with respect to $\epsilon$) in $L^2(0,T; H^{-1}(\Rd))$. For space compactness, we follow the strategy of Belgacem-Jabin~\cite[Proposition~3.1]{BelgacemJabin}. They guarantee local compactness by considering the evolution of the following quantity
    \begin{equation*}
        Q_\epsilon(t) := \iint_{\mathbb{R}^{2d}} K_h(x-y)|n\i\epsnu(x,t)-n\i\epsnu(y,t)|\dx x\dx y,
    \end{equation*}
    where $(K_h)_{0<h<1}$ is a family of nonnegative, smooth functions such that
    \begin{equation*}
        \norm{K_h}_{L^1(\Rd)} \sim \abs{\log h},\quad \text{and}\quad |x||\nabla K_h(x)| \leq C K_h(x).
    \end{equation*}
    Then, the sequence $(n\i\epsnu)_{\epsilon>0}$ is compact in $L^1_{\mathrm{loc}}(\Rd)$, provided that
    \begin{equation*}
        \lim_{h\to 0}\; \limsup_{\epsilon\to0}\; |\log h|^{-1} Q_\epsilon(t) = 0.
    \end{equation*}
    For the precise statement see~\cite[Lemma~3.1]{BelgacemJabin}.
    Our situation is of course simpler since Eq.~\eqref{eq:regularised-equation_i} is linear and the velocity field is independent of $\epsilon$. The only term that we need to be careful about is the reaction. The contribution of this term in $\ddt Q_\epsilon$ is
    \begin{align*}
        \iint_{\mathbb{R}^{2d}} & K_h(x-y)  
        \sign\big(n\i\epsnu(x)-n\i\epsnu(y)\big)\big[n\i\epsnu(x)G\i(n_\nu(x))-n\i\epsnu(y)G\i(n_\nu(y))\big] \dx x\dx y\\
        & \leq \norm{n\i\epsnu}_{L^\infty(0,T;L^\infty(\Rd))}\iint_{\mathbb{R}^{2d}} K_h(x-y) \abs*{G\i(n_\nu(x))-G\i(n_\nu(y))} \dx x\dx y \\
        &\qquad +\norm{G\i}_{L^\infty(\R)}\iint_{\mathbb{R}^{2d}} K_h(x-y)\abs*{n\i\epsnu(x)-n\i\epsnu(y)}\dx x\dx y\\
        &\leq  C\iint_{\mathbb{R}^{2d}} K_h(x-y) \abs*{n_\nu(x)-n_\nu(y)} \dx x\dx y + CQ_\epsilon(t),
    \end{align*}
    having used the fact that $G\i$ is Lipschitz. Since $n_\nu\in L^\infty(0,T;L^1(\Rd))$, the first term on the right-hand side satisfies
    \begin{equation*}
        \lim_{h\to0}|\log h|^{-1}\iint_{\mathbb{R}^{2d}} K_h(x-y)|n_\nu(x,t)-n_\nu(y,t)|\dx x\dx y = 0.
    \end{equation*}
    We conclude that $n\i\epsnu$ is compact in $L^1(0,T;L^1_{\mathrm{loc}}(\Rd))$, for $i=1,2$. Finally, using the second moment control of Proposition~\eqref{lem:second_moment}, we can deduce global compactness.
    
\end{proof}
By the compactness of solutions to the regularised equations, we can always extract subsequences, still denoted $(n\i\epsnu)_{\epsilon > 0}$, such that $ n\i\epsnu \to n\i_\nu$ 
almost everywhere, as $ \epsilon \to 0$. Using Proposition \ref{prop:regular_n_bounds}, it is easy to identify the limit of $n\epsnu$ as the unique solution satisfied by the sum of the $n_\nu\i$, \cf System~\eqref{eq:Brinkman}.

Passing to the limit $\epsilon\to0$, we can now ensure that the limit $n_\nu$ satisfies the second moment bound, and hence the entropy bounds, uniformly in $\nu$.

\begin{proposition}
\label{lem:second_moment_nu}
    The limit $n_\nu$ of the sequence $(n\epsnu)_{\epsilon>0}$ satisfies
    \begin{equation*}
        \sup_{\nu>0}\sup_{t\in[0,T]}\int_{\Rd} n_\nu|x|^2\dx x < \infty.
    \end{equation*}
\end{proposition}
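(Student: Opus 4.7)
The strategy is to close the bound of Lemma~\ref{lem:second_moment} by bootstrapping against the entropy dissipation inequality stated at the beginning of this section. Passing to the limit $\epsilon\to 0$ in Lemma~\ref{lem:second_moment}---using $n\epsnu\to n_\nu$ a.e.\ as provided by Proposition~\ref{prop:n_epsnu_compact} and the fact that $W_\nu$ does not depend on $\epsilon$---Fatou's lemma yields
\begin{equation*}
    \sup_{t\in[0,T]}\int_\Rd n_\nu(t)|x|^2\dx x \leq C + C\int_0^T\!\!\int_\Rd n_\nu|\nabla W_\nu|^2\dx x\dx t.
\end{equation*}
Since $0\leq n_\nu\leq \bar n$ by~(B2), the task reduces to bounding $\int_0^T\|\nabla W_\nu\|_{L^2(\Rd)}^2\dx t$ uniformly in $\nu$.

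The key is the algebraic identity
\begin{equation*}
    -\int_\Rd n_\nu\Delta W_\nu\dx x = \int_\Rd|\nabla W_\nu|^2\dx x + \nu\int_\Rd|\Delta W_\nu|^2\dx x,
\end{equation*}
obtained by substituting $n_\nu = W_\nu - \nu\Delta W_\nu$ and integrating by parts (both IBPs are legitimate since $W_\nu\in H^2(\Rd)$ by~(\ref{hyp:B5})). Inserting this into the entropy inequality from the section prologue gives
\begin{equation*}
    \curlyH[n_\nu(T)] + \int_0^T\|\nabla W_\nu\|_{L^2}^2\dx t \leq \curlyH[n_\nu^{\mathrm{in}}] + \int_0^T\!\!\int_\Rd \log n_\nu \bigl(n_\nu\1 G\1(n_\nu) + n_\nu\2 G\2(n_\nu)\bigr)\dx x\dx t.
\end{equation*}
The initial entropy is uniformly bounded in $\nu$ by the hypotheses of Theorem~\ref{thm:inviscid_limit}. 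Because $n_\nu\i\leq n_\nu\leq \bar n$ and $|G\i|$ is bounded on $[0,\bar n]$ by~(G1)--(G3), the growth term is dominated by $C\int_0^T\!\!\int n_\nu|\log n_\nu|\dx x\dx t$. Repeating the Gaussian-comparison and Jensen argument of Lemma~\ref{lem:entropy-bounds} directly on $n_\nu$ (its proof only used the $L^1\cap L^\infty$-bound together with a finite second moment), one obtains the time-pointwise estimate $\int n_\nu|\log n_\nu|\dx x \leq C + M_\nu(t)$, where $M_\nu(t):=\int n_\nu(t)|x|^2\dx x$, and also the lower bound $\curlyH[n_\nu(T)]\geq -C - M_\nu(T)$.

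Combining these ingredients, and replacing the Cauchy--Schwarz step $|\int n_\nu x\cdot\nabla W_\nu| \leq \tfrac12 M_\nu + \tfrac12\int n_\nu|\nabla W_\nu|^2$ in Lemma~\ref{lem:second_moment} by a Young's inequality with a sufficiently small parameter $\delta>0$, one arrives at an inequality of the schematic form
\begin{equation*}
    M_\nu(T) \leq C + C\int_0^T M_\nu(t)\dx t,
\end{equation*}
with constants independent of $\nu$; Gr\"onwall's lemma then closes the argument. The main obstacle is the circular coupling between second moment and entropy: the entropy dissipation is precisely what yields the $L^2$-control of $\nabla W_\nu$ driving $M_\nu$, but its growth contribution in turn requires the very second moment we are trying to establish, and a naive Cauchy--Schwarz produces a coefficient in front of $M_\nu(T)$ that is not strictly less than one. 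Careful tuning of the Young constant $\delta$ is what breaks this circularity so that the recursive $M_\nu(T)$-term on the right-hand side can be absorbed on the left.
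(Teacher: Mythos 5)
Your argument is essentially correct, but it takes a genuinely different route from the paper. The paper's proof is a two-line energy estimate: test Eq.~\eqref{eq:regularised-equation} by $W_\nu$ itself, so that $\int\nabla\cdot(n\epsnu\nabla W_\nu)\,W_\nu=-\int n\epsnu|\nabla W_\nu|^2$, while the symmetry and nonnegativity of the kernel give $\int \partial_t n_\nu\, W_\nu=\tfrac12\ddt\int n_\nu\,K_\nu\star n_\nu$ with a sign-definite terminal term that can be dropped; this yields $\int_0^T\!\!\int n_\nu|\nabla W_\nu|^2\leq C+\tfrac12\int n_\nu^{\mathrm{in}}K\star n_\nu^{\mathrm{in}}$ directly, which is precisely why Theorem~\ref{thm:inviscid_limit} assumes a uniform bound on $\int n_\nu^{\mathrm{in}}K\star n_\nu^{\mathrm{in}}$ --- a hypothesis your proof never invokes. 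You instead route through the entropy dissipation inequality together with the identity $-\int n_\nu\Delta W_\nu=\int|\nabla W_\nu|^2+\nu\int|\Delta W_\nu|^2$; this is exactly the mechanism the paper deploys later, in Proposition~\ref{prop:regulartiy-W}, where it is unproblematic because the $\nu$-uniform entropy and moment bounds are by then already available. Front-loading that argument forces you to break the moment/entropy circularity by absorption, and I checked that your scheme does close: with $\delta\bar n<1$ the $M_\nu(t)$-term generated by $-\mathcal{H}[n_\nu](t)\geq -C-M_\nu(t)$ can be absorbed, leaving $M_\nu(t)\leq C+C\int_0^t M_\nu(s)\dx s$, to which Gr\"onwall applies. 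Two points deserve explicit care in a write-up: first, the Gr\"onwall step needs the entropy inequality on $[0,t]$ for every intermediate $t$, not only at the terminal time $T$ as stated in Proposition~\ref{lem:entropy_inequality} (its proof does carry over verbatim); second, you are invoking an inequality that is only established rigorously \emph{after} this proposition in the paper's ordering --- this is not a circularity, since the proof of Proposition~\ref{lem:entropy_inequality} only uses the $\nu$-dependent constants $C(\nu)$ from Lemmata~\ref{lem:second_moment} and~\ref{lem:entropy-bounds}, but the logical reordering should be made explicit. On balance, your proof trades one hypothesis (the initial Brinkman energy) for a more delicate bootstrap; the paper's version is shorter and is the reason that hypothesis appears in the theorem.
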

\begin{proof}
    From the proofs of Lemmata~\ref{lem:second_moment} and~\ref{lem:entropy-bounds} we see that the only obstruction to $\nu$-uniform bounds is caused by the quantity $\iint n\epsnu |\nabla W_\nu|^2$. We will now show that in the vanishing viscosity limit, this quantity is independent of $\nu$.
    Indeed, testing Eq.~\eqref{eq:regularised-equation} by $W_\nu$ and passing to the limit $\epsilon\to0$, we obtain
    \begin{equation*}
    \int_0^T\!\!\!\int_{\Rd} n_\nu|\nabla W_\nu|^2\dx x \dx t \leq C + \frac12\int_{\R^d}n_\nu^{\mathrm{in}}\prt*{K\star n_\nu^{\mathrm{in}}}\dx x \leq C.
    \end{equation*}
\end{proof}

We now state and prove the main result of this section, namely that the solutions to the equation for the total density, $n_\nu$, are entropy weak solutions.
\begin{proposition}[Entropy Inequality]\label{lem:entropy_inequality}
    The limit $n_\nu$ of the sequence $(n\epsnu)_{\epsilon>0}$ satisfies the entropy inequality
    \begin{equation}
    \label{eq:entropy_inequality}
        \begin{split}
    \mathcal{H}[n_\nu](T) - \mathcal{H}[n_\nu^{\mathrm{in}}]
			 & - \int_0^T \!\!\! \int_\Rd  n_\nu \Lap W_\nu \dx x \dx t\\
			&\leq\int_0^T \!\!\! \int_\Rd \log n_\nu\left[
			n^{(1)}_{\nu}G^{(1)}(n_\nu) +n^{(2)}_{\nu}G^{(2)}(n_\nu) \right] \dx x \dx t.
        \end{split}
    \end{equation}
\end{proposition}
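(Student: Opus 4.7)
The plan is to derive the entropy inequality first at the level of the viscous regularisation~\eqref{eq:regularised-equation} and then pass to the limit $\epsilon\to 0$, exploiting the compactness from Proposition~\ref{prop:n_epsnu_compact} and the $\epsilon$-uniform entropy control from Lemma~\ref{lem:entropy-bounds}.

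At the regularised level, since $\log$ is singular at the origin, I would first multiply Eq.~\eqref{eq:regularised-equation} by the truncated logarithm $\log(n\epsnu+\delta)$ for $\delta>0$, a legitimate test function thanks to the $L^2(0,T;H^1(\Rd))$ regularity of $n\epsnu$. The viscous term produces the non-negative dissipation
\begin{equation*}
\epsilon\int_0^T\!\!\!\int_\Rd\frac{|\nabla n\epsnu|^2}{n\epsnu+\delta}\dx x\dx t,
\end{equation*}
which I would simply discard to obtain an inequality. The transport contribution rearranges as $-\int_0^T\!\!\int_\Rd \frac{n\epsnu}{n\epsnu+\delta}\nabla W_\nu\cdot\nabla n\epsnu\dx x\dx t$ and, after sending $\delta\to 0$ and performing a further integration by parts justified by~(\ref{hyp:B5}), becomes $\int_0^T\!\!\int_\Rd n\epsnu\Lap W_\nu\dx x\dx t$; here one uses that $\nabla n\epsnu=0$ almost everywhere on $\{n\epsnu=0\}$. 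The time term converges to $\mathcal{H}[n\epsnu](T)-\mathcal{H}[n\epsnu^{\mathrm{in}}]$ and the reaction term converges to its analogue without the $\delta$-regularisation, both by dominated convergence, using the uniform $L^\infty$-bound on $n\epsnu$ and the $\epsilon$-uniform entropy control.

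Next, I would pass $\epsilon\to 0$. By Proposition~\ref{prop:n_epsnu_compact}, up to extraction, $n\epsnu\to n_\nu$ and $n\i\epsnu\to n\i_\nu$ strongly in $L^1(0,T;L^1(\Rd))$ and almost everywhere. Combined with the uniform $L^\infty$-bound and the $\epsilon$-uniform entropy bound from Lemma~\ref{lem:entropy-bounds}, Vitali's convergence theorem yields $\mathcal{H}[n\epsnu](T)\to\mathcal{H}[n_\nu](T)$, and analogously for the initial entropy, since $n\epsnu^{\mathrm{in}}\to n_\nu^{\mathrm{in}}$ in $L^1$ by construction. The term with $\Lap W_\nu$ passes to the limit thanks to the strong $L^1$-convergence of $n\epsnu$ together with $\Lap W_\nu\in L^\infty(0,T;L^q(\Rd))$ from~(\ref{hyp:B5}). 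For the reaction term I would use the pointwise estimate $|n\i\epsnu\log n\epsnu|\leq n\epsnu|\log n\epsnu|$ together with the uniform entropy bound to secure uniform integrability, and invoke Vitali once more (with the convention $0\log 0=0$) to identify the limit as $n\i_\nu\log n_\nu$.

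The principal obstacle is the passage to the limit in the reaction term: since $\log n\epsnu$ is unbounded, strong $L^1$-convergence alone does not suffice, and the uniform integrability coming from Lemma~\ref{lem:entropy-bounds} is exactly what closes the argument. A secondary technical point is the rigorous justification of the chain rule used to derive the regularised entropy identity in the first place; I would handle this by first mollifying $n\epsnu$ in space, then removing the mollification parameter, then $\delta$, and finally $\epsilon$.
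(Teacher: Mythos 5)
Your overall strategy coincides with the paper's: test the regularised equation~\eqref{eq:regularised-equation} with a truncated logarithm, discard the nonnegative viscous dissipation, and remove the two regularisation parameters. There is, however, one step that fails as written. The function $\log(n\epsnu+\delta)$ is \emph{not} an admissible test function in $L^2(0,T;H^1(\Rd))$: since $n\epsnu(\cdot,t)\in L^1(\Rd)$, the density vanishes at infinity, so $\log(n\epsnu+\delta)\to\log\delta\neq 0$ as $|x|\to\infty$ and the function is not even in $L^2(\Rd)$, whatever the regularity of $n\epsnu$. This is precisely why the paper works with the localised functional $\mathcal{H}^\phi_\delta$, testing with $\log(n\epsnu+\delta)\,\phi$ for $\phi\in C_c^\infty(\Rd)$, and only removes the cut-off at the very end (taking $\phi=\chi_R$ and using the $L^\infty(0,T;L^1)$ control of $n_\nu\log n_\nu$ together with monotone convergence). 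The localisation is not a cosmetic device: it also legitimises the integrations by parts on the whole space that you invoke, at the cost of the extra commutator terms $I_2^{\epsilon,\delta}$ and $I_4^{\epsilon,\delta}$, the latter requiring Proposition~\ref{prop:regular_n_bounds} to vanish. If you insist on avoiding the cut-off, you would have to test with $\log(1+n\epsnu/\delta)=\log(n\epsnu+\delta)-\log\delta$, which does lie in $H^1(\Rd)$, and then track the additional $\log\delta$ multiple of the mass balance; either way the issue must be addressed explicitly.

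A secondary point concerns your order of limits. You send $\delta\to0$ before $\epsilon\to0$, so the subsequent limit $\epsilon\to0$ must be performed on integrands containing the unbounded factor $\log n\epsnu$. Your claim that the ``uniform integrability coming from Lemma~\ref{lem:entropy-bounds}'' closes this is imprecise: a uniform $L^1$ bound on $n\epsnu|\log n\epsnu|$ does not imply equi-integrability of that family. What does work is the elementary bound $n|\log n|\leq C(\sqrt{n}+n^2)$ combined with Cauchy--Schwarz on small sets and the $\epsilon$-uniform second moment of Lemma~\ref{lem:second_moment} for tightness. The paper sidesteps this entirely by passing $\epsilon\to0$ first at fixed $\delta$, where $\log(n\epsnu+\delta)$ is bounded between $\log\delta$ and $\log(\bar n+\delta)$ and dominated convergence applies painlessly, and only then letting $\delta\to0$ for the single limit function $n_\nu$. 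I would recommend adopting both the spatial cut-off and this order of limits.
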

\begin{proof}
Let $\delta > 0$ and $\phi \in C^{\infty}_{c} (\R^{d})$,
$\phi \geq 0 $. We consider the following regularised form of the entropy functional
	\begin{equation*}
		\begin{aligned}      
  \mathcal{H}_{\delta}^{\phi}[n_{\epsilon,\nu}](t) \coloneqq \int_{\R^d}^{}
			\left( n_{\epsilon,\nu}(x,t) + \delta \right)(\log(n\epsnu(x,t)+\delta)-1)
			\phi(x)\dx x,
		\end{aligned}
	\end{equation*}
 and by $\mathcal{H}^{\phi}[f]$ we denote the above functional with $\delta=0$.
 Given the regularity of $n\epsnu$ provided by parabolic theory, the weak form of Eq.~\eqref{eq:regularised-equation} can be written as
	\begin{equation*}
		\begin{aligned}
        \int_0^T \!\!\! \int_\Rd  \partialt{n\epsnu}
			\varphi(x,t)  &+ n\epsnu \grad \varphi(x,t) \cdot \grad W_\nu \dx x
			\dx t
			 + \epsilon \int_0^T \!\!\! \int_\Rd \grad n\epsnu
			\cdot \grad \varphi(x,t)\dx x \dx t \\
			 & = \int_0^T \!\!\! \int_\Rd \prt*{n\1\epsnu G\1(n_\nu) + n\2\epsnu G\2(n_\nu)}\varphi(x,t)\dx x \dx t,
		\end{aligned}
	\end{equation*}
    for any $\varphi \in L^2(0,T;H^1(\R^d))$. Choosing
	\begin{equation*}
		\begin{aligned}
			\varphi(x,t) \coloneqq \log(n\epsnu + \delta) \phi(x),
		\end{aligned}
	\end{equation*}
	we obtain
	\begin{equation}\label{eq:entropy_estimate_split}
		\begin{aligned}
			\int_0^T \dfrac{\mathup{d}{}}{\mathup{dt}} \mathcal{H}^{\phi}_{\delta} [n\epsnu](t)
			\dx t+ I^{\epsilon, \delta}_{1} + I^{\epsilon, \delta}_{2} +
			I^{\epsilon, \delta}_{3} + I^{\epsilon, \delta}_{4} = I^{\epsilon,
					\delta}_{5},
		\end{aligned}
	\end{equation}
	where
	\begin{equation*}
		\begin{aligned}
			I^{\epsilon, \delta}_{1} & =\int_0^T \!\!\!\int_{\R^{d}}^{} \frac{n\epsnu}{n\epsnu+
			\delta} \grad n\epsnu \cdot \grad  W_\nu \phi \dx x\dx t,                  \\
			I^{\epsilon, \delta}_{2} & = \int_0^T \!\!\! \int_\Rd n\epsnu \log(n\epsnu+\delta)\grad
			\phi \cdot \grad W_\nu \dx x\dx t,                                 \\
			I^{\epsilon, \delta}_{3} & = \epsilon \int_0^T \!\!\! \int_\Rd \frac{\abs{\grad
			n\epsnu}^{2}}{n\epsnu+\delta} \phi \dx x \dx t,                              \\
			I^{\epsilon, \delta}_{4} & = \epsilon \int_0^T \!\!\! \int_\Rd \log(n\epsnu
			+\delta)\grad \phi \cdot \grad n\epsnu \dx x \dx t,                          \\
			I^{\epsilon, \delta}_{5} & = \int_0^T \!\!\! \int_\Rd \log(n\epsnu+\delta) \prt*{n\1\epsnu G^{(1)}(n_\nu) + n\2\epsnu G^{(2)}(n_\nu)} \phi \dx x \dx t.
		\end{aligned}
	\end{equation*}

	Now we treat the terms individually. Starting with $ I_{1}^{\epsilon,\delta}$ we see
	\begin{equation*}
		\begin{aligned}
			I_{1}^{\epsilon,\delta}
			 & = \int_0^T \!\!\!\int_{\R^{d}}^{} \frac{n\epsnu}{n\epsnu+
			\delta} \grad n\epsnu \cdot \grad  W_\nu \phi \dx x\dx t,\\
			 & = \int_0^T \int_{\R^{d}}^{} \grad n\epsnu \cdot \grad W_\nu \phi
			\dx x\dx t-  \delta \int_0^T \int_{\R^{d}}^{} \frac{\grad n\epsnu \cdot
			\grad  W_\nu}{n\epsnu+ \delta}  \phi \dx x\dx t, \\
			 & = -\int_0^T \!\!\! \int_\Rd  n\epsnu \Lap W_\nu \phi
			\dx x\dx t- \int_0^T\int_\Rd n\epsnu\nabla W_\nu\cdot\nabla\phi \dx x \dx t \\
            &\qquad + \delta \int_0^T \int_{\R^{d}}^{}  \log(n\epsnu
			+ \delta) \Lap  W_\nu \phi \dx x\dx t +\delta \int_0^T \!\!\! \int_\Rd 
			\log(n\epsnu + \delta) \grad
			W_\nu\cdot \grad \phi \dx x \dx t.
		\end{aligned}
	\end{equation*}
    Passing to the limit $\epsilon\to 0$, we readily obtain
    	\begin{equation*}
		\begin{aligned}
			I_1^{\epsilon,\delta}\to I_{1}^{\delta}
			 & = -\int_0^T \!\!\! \int_\Rd  n_\nu \Lap W_\nu \phi
			\dx x\dx t- \int_0^T\int_\Rd n_\nu\nabla W_\nu\cdot\nabla\phi \dx x \dx t \\
            &\qquad + \delta \int_0^T \int_{\R^{d}}^{}  \log(n_\nu
			+ \delta) \Lap  W_\nu \phi \dx x\dx t                                + \delta \int_0^T \!\!\! \int_\Rd 
			\log(n_\nu + \delta) \grad
			W_\nu\cdot \grad \phi \dx x \dx t.
		\end{aligned}
	\end{equation*}
    For the last two integrals, we observe that
    \begin{align*}
        \delta \int_0^T \int_{\R^{d}}^{}  \log(n_\nu
		+ \delta) \Lap  W_\nu \phi \dx x\dx t                         + \delta \int_0^T \!\!\! \int_\Rd 
		\abs{\log(n_\nu + \delta)}^{} |\grad
		W_\nu\cdot \grad \phi| \dx x \dx t\leq C \delta\abs{\log\delta}. 
    \end{align*}
    Thus, when $\epsilon\to 0$ and $\delta\to 0$,
    \begin{equation*}
        I_1^{\epsilon,\delta}\to -\int_0^T \!\!\! \int_\Rd  n_\nu \Lap W_\nu \phi
			\dx x\dx t- \int_0^T\int_\Rd n_\nu\nabla W_\nu\cdot\nabla\phi \dx x \dx t. 
    \end{equation*}
    Now we look at the next term from
	Eq.~\eqref{eq:entropy_estimate_split}, namely
	\begin{equation*}
		\begin{aligned}
			I_{2}^{\epsilon, \delta}
			 & = \int_0^T \!\!\! \int_\Rd n\epsnu\log(n\epsnu+\delta)\grad
			\phi \cdot \grad W_\nu \dx x\dx t.
		\end{aligned}
	\end{equation*}
	Using the Dominated Convergence Theorem, in the limit $ \epsilon \to 0$ and  $\delta \to 0 $ we find that
	\begin{equation*}
		\begin{aligned}
			I^{\epsilon,\delta}_{2} \to \int_0^T \!\!\! \int_\Rd  n_\nu\log n_\nu \grad
			\phi \cdot
			\grad  W_\nu  \dx x \dx t.
		\end{aligned}
	\end{equation*}
	The next term from Eq.~\eqref{eq:entropy_estimate_split}, $I_{3}^{\epsilon,\delta}$, is nonnegative and can be dropped in the limit.  
    The fourth term from Eq.~\eqref{eq:entropy_estimate_split} is estimated by
	\begin{equation*}
		\begin{aligned}
			I_{4}^{\epsilon,\delta} &= \epsilon \int_0^T \!\!\! \int_\Rd \log(n\epsnu
			+\delta)\grad \phi \cdot \grad n\epsnu \dx x \dx t\\[0.4em]
            &\leq \prt*{\norm{n\epsnu}_{L^\infty(0,T; L^\infty(\Rd))}+|\log\delta|}\norm{\nabla\phi}_{L^2(\Rd)}\norm{\nabla n\epsnu}_{L^2(0,T;L^2(\Rd))}\\[0.4em]
            &\leq C\sqrt\epsilon,
		\end{aligned}
	\end{equation*}
	having used Lemma~\ref{prop:regular_n_bounds}. Thus,
	when $ \epsilon \to 0$ we have $ I_{4}^{\epsilon,\delta} \to 0 $. The final term from Eq.~\eqref{eq:entropy_estimate_split} is given by
	\begin{equation*}
		\begin{aligned}
			I_{5}^{\epsilon, \delta} = \int_0^T \!\!\! \int_\Rd \log(n\epsnu+\delta) \prt*{n\1\epsnu G^{(1)}(n_\nu) + n\2\epsnu G^{(2)}(n_\nu)} \phi \dx x \dx t,
		\end{aligned}
	\end{equation*}
	which, again by Dominated Convergence, converges to
	\begin{equation*}
		\begin{aligned}
			I_{5}^{\epsilon, \delta} &\to \int_0^T \!\!\! \int_\Rd \log n_\nu\prt*{
				n_\nu^{(1)} G^{(1)}(n_\nu) + n_\nu^{(2)} G^{(2)}(n_\nu)} \phi \dx x \dx t.
		\end{aligned}
	\end{equation*}
	Finally, since $ n\epsnu \in C([0,T];L^{2}(\Rd))$, the mapping $ t \mapsto \mathcal{H}^{\phi}_{\delta} [n\epsnu(t)] $ is continuous in $[0,T]$. We therefore have
    \begin{align*}
        \int_0^T \dfrac{\mathup{d}{}}{\mathup{dt}} \mathcal{H}^{\phi}_{\delta} [n\epsnu]
			\dx t= \mathcal{H}^{\phi}_{\delta} [n\epsnu](T) - \lim_{s\to 0}\mathcal{H}^{\phi}_{\delta} [n\epsnu](s) = \mathcal{H}^{\phi}_{\delta} [n\epsnu](T) - \mathcal{H}^{\phi}_{\delta} [n\epsnu^{\mathrm{in}}],
    \end{align*}
    and, as $\epsilon\to 0$ and then $\delta\to0$, we obtain
    \begin{align*}
        \mathcal{H}^{\phi}_{\delta} [n\epsnu](T) &- \mathcal{H}^{\phi}_{\delta} [n\epsnu^{\mathrm{in}}]\\ &\to \int_\Rd n_\nu(T)\prt{\log(n_\nu(T)+\delta)-1}\phi \dx x-\int_\Rd n_\nu^{\mathrm{in}}\prt{\log(n_\nu^{\mathrm{in}}+\delta)-1}\phi \dx x\\
        &\to \int_\Rd n_\nu(T)\prt{\log n_\nu(T)-1}\phi \dx x-\int_\Rd n_\nu^{\mathrm{in}}\prt{\log n_\nu^{\mathrm{in}}-1}\phi \dx x.
    \end{align*}
    
 Combining all the pieces of the jigsaw, we can pass to the limits $\epsilon\to 0$ and $\delta\to 0$ in Eq.~\eqref{eq:entropy_estimate_split} to get 
	\begin{equation*}
		\begin{aligned}
			\mathcal{H}^{\phi}[n_\nu](T) - \mathcal{H}^{\phi}[n_\nu^{\mathrm{in}}]
			 & - \int_0^T \!\!\! \int_\Rd  n_\nu \Lap W_\nu \phi \dx x \dx t-
			\int_0^T \!\!\! \int_\Rd  n_\nu \grad W_\nu \cdot \grad \phi \dx x
			\dx t \\
			 &\;\; + \int_0^T \!\!\! \int_\Rd  n_\nu\log n_\nu\grad \phi \cdot \grad W_\nu\dx x \dx t \\
			 & \leq \int_0^T \!\!\! \int_\Rd \log n_\nu\left[
			n^{(1)}_{\nu}G^{(1)}(n_\nu) +n^{(2)}_{\nu}G^{(2)}(n_\nu) \right] \phi \dx x \dx t,
		\end{aligned}
	\end{equation*}
	for any $ \phi \in C_{c}^{\infty}(\Rd),\, \phi \geq 0 $. 
    Let us now choose $\phi=\chi_R$, where $\chi_R$ is a sequence of smooth cut-off functions such that $|\nabla \chi_R|\lesssim R^{-1}$. Then, using the $L^\infty L^1$-control of $n_\nu\log n_\nu$, we can pass to the limit $R\to\infty$ by the Monotone Convergence Theorem to obtain Eq.~\eqref{eq:entropy_inequality}. \qedhere
\end{proof}

Let us conclude this section with a short remark that, by virtue of Proposition~\ref{lem:second_moment_nu}, the entropy as well as the right-hand side of Eq.~\eqref{eq:entropy_inequality} are bounded uniformly in $\nu>0$.

\section[Compactness]{Compactness of $ W_\nu $ }
This section is dedicated to establishing the strong convergence of $W_\nu$ in $L^2(0,T; L^2(\Rd))$ using the celebrated Aubin-Lions Lemma.

\begin{proposition}[Regularity of $W_\nu$]
    \label{prop:regulartiy-W}
    Let $(W_\nu)_{\nu>0}$ be a sequence of potentials associated to solutions of System \eqref{eq:Brinkman}. Then, there exists a constant $C>0$ such that
    \begin{align*}
    	\norm{\grad W_\nu}_{L^{2}(0,T;L^{2}(\Rd))} \leq C, \qquad \text{and}\qquad
    	 \norm{\partial_{t} W_\nu}_{L^{2}(0,T;H^{-1}(\Rd))} \leq C.
    \end{align*}
\end{proposition}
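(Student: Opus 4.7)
Both bounds will be extracted from the entropy inequality of Proposition \ref{lem:entropy_inequality} combined with Brinkman's law. The main input for the gradient estimate is the observation that the sign-indefinite entropy-production term $-\int n_\nu \Delta W_\nu$ is in fact a (signed) Fisher information of $W_\nu$.

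\emph{Gradient estimate.} Using $n_\nu = W_\nu - \nu \Delta W_\nu$ and integrating by parts (justified by (\ref{hyp:B4})--(\ref{hyp:B5}) at fixed $\nu$),
\begin{align*}
    -\int_\Rd n_\nu \Delta W_\nu \dx x &= -\int_\Rd W_\nu \Delta W_\nu \dx x + \nu \int_\Rd (\Delta W_\nu)^2 \dx x\\
    &= \int_\Rd |\nabla W_\nu|^2 \dx x + \nu \int_\Rd (\Delta W_\nu)^2 \dx x.
\end{align*}
Substituting into Eq.~\eqref{eq:entropy_inequality} and discarding the nonnegative viscous term yields
\begin{align*}
    \mathcal{H}[n_\nu](T) - \mathcal{H}[n_\nu^{\mathrm{in}}] &+ \int_0^T \!\!\! \int_\Rd |\nabla W_\nu|^2 \dx x \dx t\\
    &\leq \int_0^T \!\!\! \int_\Rd \log n_\nu \big[n_\nu\1 G\1(n_\nu) + n_\nu\2 G\2(n_\nu)\big] \dx x \dx t.
\end{align*}
The right-hand side is bounded uniformly in $\nu$: on $\{n_\nu \geq 1\}$ the integrand is controlled by (B2) and the continuity of $G\i$, while on $\{n_\nu < 1\}$ it is dominated pointwise by $C\, n_\nu |\log n_\nu|$, whose integral is bounded thanks to Lemma~\ref{lem:entropy-bounds} made $\nu$-uniform by virtue of the moment bound of Proposition~\ref{lem:second_moment_nu}. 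The same arguments give a uniform lower bound on $\mathcal{H}[n_\nu](T)$, while the initial entropy is controlled by assumption. This delivers the first estimate.

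\emph{Time derivative estimate.} I argue by duality. Given $\varphi\in H^1(\Rd)$, let $\psi = K_\nu \star \varphi$, i.e.\ the solution of $-\nu \Delta \psi + \psi = \varphi$. Testing this resolvent equation by $\psi$ and by $-\Delta \psi$, and using Cauchy--Schwarz, I obtain
\begin{align*}
    \|\psi\|_{L^2(\Rd)} \leq \|\varphi\|_{L^2(\Rd)}, \qquad \|\nabla \psi\|_{L^2(\Rd)} \leq \|\nabla \varphi\|_{L^2(\Rd)},
\end{align*}
both \emph{uniformly in $\nu$}. Since $K_\nu$ is symmetric, $\langle \partial_t W_\nu, \varphi\rangle = \langle \partial_t n_\nu, \psi\rangle$, and inserting the weak form of the equation satisfied by $n_\nu = n_\nu\1 + n_\nu\2$ gives
\begin{align*}
    \langle \partial_t n_\nu, \psi\rangle = -\int_\Rd n_\nu \nabla W_\nu \cdot \nabla \psi \dx x + \int_\Rd \psi \big[ n_\nu\1 G\1(n_\nu) + n_\nu\2 G\2(n_\nu)\big] \dx x.
\end{align*}
Using (B2) and the $L^\infty(0,\bar n)$-bound on the $G\i$, this is dominated by $C(\|\nabla W_\nu\|_{L^2(\Rd)} + 1)\|\varphi\|_{H^1(\Rd)}$. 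Taking the supremum over unit balls in $H^1$, squaring and integrating in time then combines with the gradient estimate to conclude.

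\emph{Main obstacle.} The decisive point is the first identity: the entropy-production term $-\int n_\nu \Delta W_\nu$ is a priori sign-indefinite, and it is precisely Brinkman's law that allows one to rewrite it as the nonnegative quantity $\int |\nabla W_\nu|^2 + \nu \int (\Delta W_\nu)^2$, thereby transferring the dissipative information from the density $n_\nu$ to the velocity potential $W_\nu$ in a $\nu$-uniform manner. Without this algebraic observation, the entropy inequality alone provides no control on $\nabla W_\nu$ that survives the inviscid limit.
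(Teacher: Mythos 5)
Your proposal is correct and follows essentially the same route as the paper: the gradient bound comes from rewriting $-\int n_\nu \Lap W_\nu$ via Brinkman's law as $\int \abs{\grad W_\nu}^2 + \nu\int\abs{\Lap W_\nu}^2$ and feeding it into the entropy inequality (whose right-hand side is made $\nu$-uniform by the moment and entropy bounds), while the time-derivative bound uses the same duality through $K_\nu$ with the $\nu$-uniform contraction properties of the resolvent. The only cosmetic difference is that you derive $\norm{\grad(K_\nu\star\varphi)}_{L^2}\leq\norm{\grad\varphi}_{L^2}$ by energy estimates on the resolvent equation, whereas the paper invokes Young's inequality with $\norm{K_\nu}_{L^1}=1$.
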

\begin{remark}[Weak Convergence of Velocity]
    It is worthwhile to point out that the uniform bound on $\nabla W_\nu$ implies its weak convergence (up to a subsequence) which plays an important role in the subsequent analysis.
\end{remark}

\begin{proof}
    We begin by proving the spatial regularity. To this end, we observe that
	\begin{align*}
        \int_0^T \!\!\! \int_\Rd \abs{\grad W_\nu}^{2} \dx x \dx t
        & = - \int_0^T \!\!\! \int_\Rd W_\nu \Delta  W_\nu  \dx x \dx t.
    \end{align*}
	Using Brinkman's law, we get
	\begin{align*}
        \int_0^T \!\!\! \int_\Rd \abs{\grad W_\nu}^{2} \dx x \dx t
        & = - \int_0^T \!\!\! \int_\Rd n_\nu \Lap W_\nu  \dx x
        \dx t- \nu \int_0^T \!\!\! \int_\Rd  \abs{\Lap W_\nu}^{2}  \dx x \dx t \\
        & \leq - \int_0^T \!\!\! \int_\Rd n_\nu \Lap W_\nu  \dx x \dx t.
	\end{align*}
    Rearranging the entropy dissipation inequality, Eq.~\eqref{eq:entropy_inequality}, we find
    \begin{equation}\label{eq: unif bnd nDeltaW}
    \begin{aligned}
       - \int_0^T \!\!\! \int_\Rd n_\nu \Lap W_\nu  \dx x \dx t & \leq  \int_0^T \!\!\! \int_\Rd \log n_\nu \brk*{ n^{(1)}_{\nu}G^{(1)}(n_\nu) +n^{(2)}_{\nu}G^{(2)}(n_\nu)} \dx x \dx t\\[0.5em]
       &\qquad + \curlyH[n_\nu^{\mathrm{in}}] - \curlyH[n_\nu](T)\\[0.7em]
       &\leq C\prt*{\norm{n_\nu}_{L^\infty(0,T;L^1(\Rd))} + \norm{n_\nu\log n_\nu}_{L^\infty(0,T;L^1(\Rd))}},
    \end{aligned}
    \end{equation}
    which is bounded uniformly. Hence $ \grad  W_\nu \in L^2(0,T;L^2(\Rd))$.
As for the bound on the time derivative, let us recall that $W_\nu = K_\nu \star n_\nu$, and $\norm{K_\nu}_{L^1(\Rd)}=~1$. Then, for any $\varphi\in L^2(0,T;H^1(\Rd))$, we have
    \begin{equation*}
		\begin{aligned}
			\int_0^T \!\!\! \int_\Rd  \varphi  \partialt{W_\nu} \dx x \dx t
			 & = \int_0^T \!\!\! \int_\Rd  \partialt{n_\nu} K_\nu\star\varphi \dx x \dx t\\[0.5em]
             & = -\int_0^T\!\!\!\int_\Rd n_\nu\nabla W_\nu \cdot \nabla\varphi\star K_\nu\dx x\dx t \\[0.5em]
            &\qquad + \int_0^T\!\!\!\int_\Rd \brk*{n_\nu^{(1)}G^{(1)}(n_\nu) + n_\nu^{(2)}G^{(2)}(n_\nu)}\varphi\star K_\nu\dx x\dx t\\[0.8em]
            &\leq \norm{\sqrt{n_\nu}}_{L^\infty(0,T;L^\infty(\Rd))}\norm{\sqrt{n_\nu}\nabla W_\nu}_{L^2(0,T; L^2(\Rd))}\norm{\nabla\varphi}_{L^2(0,T; L^2(\Rd))}\\[0.5em]
            &\qquad + C\norm{\varphi}_{L^2(0,T; L^2(\Rd))}.
		\end{aligned}
	\end{equation*}
    It follows that $\partial_t W_\nu \in L^2(0,T;H^{-1}(\Rd))$, uniformly in $\nu>0$. \qedhere
\end{proof}
By Proposition~\ref{prop:regulartiy-W}, we deduce that there exists a $ \chi \in L^{2}(0,T;L^{2}(\Rd)) $
such that
\begin{equation*}
	\begin{aligned}
		W_\nu \to \chi,
	\end{aligned}
\end{equation*}
strongly in $ L^2(0,T;L^{2}(\Rd)) $ as $ \nu \to 0 $. It is not difficult to identify the limit of $W_\nu$ as the weak limit of $n_\nu$, $n_0$.
\begin{proposition}[Identification of limit]\label{prop:Identification of limit}
	Let $ n_{0}$ be the weak limit of $n_\nu$. Then, the strong limit of $  W_\nu $
	in $ L^2(0,T;L^{2}(\Rd)) $ is $ n_{0} $.
\end{proposition}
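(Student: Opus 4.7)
The plan is to identify $\chi$ by passing to the vanishing-viscosity limit in the distributional form of Brinkman's law. Fix an arbitrary test function $\varphi\in C_c^\infty(\Rd\times(0,T))$. Multiplying the pointwise identity $-\nu\Lap W_\nu + W_\nu = n_\nu$ by $\varphi$ and integrating by parts twice in space (which is licit by the regularity properties (\ref{hyp:B4})--(\ref{hyp:B5}) of $W_\nu$ at each fixed $\nu>0$), we obtain the weak formulation
\begin{equation*}
    -\nu\int_0^T\!\!\!\int_\Rd W_\nu\,\Lap\varphi\dx x\dx t + \int_0^T\!\!\!\int_\Rd W_\nu\,\varphi\dx x\dx t = \int_0^T\!\!\!\int_\Rd n_\nu\,\varphi\dx x\dx t.
\end{equation*}

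Next, I would pass to the limit $\nu\to 0$ term by term. For the viscous term, the strong convergence $W_\nu\to\chi$ in $L^2(0,T;L^2(\Rd))$ supplies a $\nu$-uniform bound $\|W_\nu\|_{L^2(0,T;L^2(\Rd))}\leq C$, so by Cauchy--Schwarz
\begin{equation*}
    \left|\nu\int_0^T\!\!\!\int_\Rd W_\nu\,\Lap\varphi\dx x\dx t\right| \leq \nu\,C\,\|\Lap\varphi\|_{L^2(0,T;L^2(\Rd))} \longrightarrow 0.
\end{equation*}
The second term on the left converges to $\int_0^T\!\!\int_\Rd \chi\varphi\dx x\dx t$ by strong convergence of $W_\nu$. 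For the right-hand side, the uniform control $n_\nu\in L^\infty(0,T;L^1(\Rd)\cap L^\infty(\Rd))$ from (B1)--(B2) guarantees (up to subsequence) weak convergence in $L^2(0,T;L^2(\Rd))$ to a limit that we call $n_0$; on $\mathrm{supp}\,\varphi$ this weak convergence delivers $\int n_\nu\varphi \to \int n_0\varphi$.

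Combining the three limits yields $\int_0^T\!\!\int_\Rd(\chi - n_0)\,\varphi\dx x\dx t = 0$ for every $\varphi\in C_c^\infty(\Rd\times(0,T))$, so by a standard density argument $\chi = n_0$ almost everywhere, which concludes the proof. No substantial obstacle is expected here: the heavy lifting (uniform $L^2$-bound on $W_\nu$, entropy dissipation, compactness via Aubin--Lions) has already been carried out in the previous sections, and the identification is simply a consequence of the linearity of the elliptic equation combined with the vanishing of the viscous term as $\nu\to 0$.
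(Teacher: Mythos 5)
Your argument is correct, and it reaches the identification by a mildly different route than the paper. You pass to the limit in the weak form of Brinkman's law after integrating by parts twice, so that the elliptic identity reads
\begin{equation*}
    -\nu\int_0^T\!\!\!\int_\Rd W_\nu\,\Lap\varphi\dx x\dx t + \int_0^T\!\!\!\int_\Rd W_\nu\,\varphi\dx x\dx t = \int_0^T\!\!\!\int_\Rd n_\nu\,\varphi\dx x\dx t,
\end{equation*}
and you kill the viscous term using the explicit factor $\nu$ together with a $\nu$-uniform $L^2$ bound on $W_\nu$ (which indeed follows from the already-established strong convergence, or more directly from (B3) via $\norm{W_\nu}_{L^2}^2\leq \norm{W_\nu}_{L^1}\norm{W_\nu}_{L^\infty}$, avoiding any appearance of circularity). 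The paper instead never integrates by parts: it uses the representation $W_\nu = K_\nu\star n_\nu$, moves the kernel onto the test function to write $\int W_\nu\varphi = \int n_\nu\,(K_\nu\star\varphi)$, and concludes from $n_\nu\rightharpoonup n_0$ weakly in $L^2$ together with $K_\nu\star\varphi\to\varphi$ strongly in $L^2$. Your version is slightly more elementary in that it needs no properties of the fundamental solution $K_\nu$ beyond the equation itself, at the modest cost of invoking the pointwise validity of Brinkman's law and enough regularity of $W_\nu$ at fixed $\nu$ to justify the integration by parts (which (\ref{hyp:B4})--(\ref{hyp:B5}) supply, and which is anyway harmless since $\Lap$ lands on the smooth compactly supported $\varphi$). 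Both arguments exploit the same weak--strong pairing in the product $n_\nu\varphi$ versus $W_\nu\varphi$, and either one completes the identification $\chi=n_0$.
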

\begin{proof}
	Let $ \varphi \in C^{\infty }_{c}(\Rd\times(0,T))$ be a test function.
	Then
	\begin{equation*}
		\begin{aligned}
			\int_0^T \!\!\! \int_\Rd  W_\nu \varphi  \dx x \dx t\rightarrow \int_0^T
			\int_\Rd  \chi \varphi  \dx x \dx t.
		\end{aligned}
	\end{equation*}
	Concomitantly, we know,
	\begin{equation*}
		\begin{aligned}
			\int_0^T \!\!\! \int_\Rd W_\nu \varphi \dx x \dx t
			 & = \int_0^T \!\!\! \int_\Rd K_{\nu}\star n_\nu \varphi\dx x \dx t 
			  = \int_0^T \!\!\! \int_\Rd n_\nu K_{\nu}\star\varphi  \dx x \dx t.
		\end{aligned}
	\end{equation*}
    Combining the two equations, we get
	\begin{equation*}
		\begin{aligned}
			\int_0^T \!\!\! \int_\Rd  n_\nu K_{\nu}\star\varphi \dx x \dx t\to \int_0^T
			\int_\Rd  \chi \varphi \dx x\dx t.
		\end{aligned}
	\end{equation*}
    On the other hand, we know that $ n_\nu \rightharpoonup n_0 $ in $ L^{2}
		(0,T;L^{2}(\Rd)) $ and $ K_{\nu}\star g \to g $ in $ L^{2}(0,T;L^{2}(\Rd)) $, as
	$ \nu \to 0 $. Thus,
	\begin{equation*}
		\begin{aligned}
			\int_0^T \!\!\! \int_\Rd  n_\nu K_{\nu}\star g \dx x \dx t\to \int_0^T
			\int_\Rd  n_{0} g \dx x\dx t.
		\end{aligned}
	\end{equation*}
	This shows that $ \chi = n_0 $. \qedhere
    \end{proof}
Using the entropy inequality, the strong compactness of $W_\nu$ can be leveraged to get the compactness of the total densities $n_\nu$. 

\begin{proposition}
    \label{lem:compactness_n}
    We have $ n_\nu \to n_{0}  $ in $ L^{2}(0,T;L^{2}(\Rd)) $.
\end{proposition}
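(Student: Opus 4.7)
The plan is to upgrade the weak convergence $n_\nu \rightharpoonup n_0$ in $L^2(0,T;L^2(\Rd))$ to strong convergence by the standard trick: combine weak convergence with convergence of the norms, \ie, establish that $\|n_\nu\|_{L^2(L^2)}^2 \to \|n_0\|_{L^2(L^2)}^2$. The key algebraic identity exploits Brinkman's law, which allows us to replace one factor of $n_\nu$ by $W_\nu - \nu \Delta W_\nu$.

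First, I would write, using $n_\nu = -\nu \Lap W_\nu + W_\nu$,
\begin{equation*}
    \int_0^T \!\!\! \int_\Rd n_\nu^2 \dx x\dx t = \int_0^T \!\!\! \int_\Rd n_\nu W_\nu \dx x\dx t + \int_0^T \!\!\! \int_\Rd n_\nu (n_\nu - W_\nu)\dx x\dx t.
\end{equation*}
The remainder term is
\begin{equation*}
    \int_0^T \!\!\! \int_\Rd n_\nu(n_\nu - W_\nu)\dx x\dx t = -\nu \int_0^T \!\!\! \int_\Rd n_\nu \Lap W_\nu \dx x \dx t,
\end{equation*}
and substituting $n_\nu = -\nu\Lap W_\nu + W_\nu$ once more and integrating by parts gives the manifestly nonnegative expression $\nu \int\!\!\int |\nabla W_\nu|^2 + \nu^2 \int\!\!\int |\Lap W_\nu|^2$.

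Next, I would invoke the entropy dissipation inequality of Proposition~\ref{lem:entropy_inequality} exactly in the form rearranged in Eq.~\eqref{eq: unif bnd nDeltaW}, which yields the uniform bound
\begin{equation*}
    -\int_0^T \!\!\! \int_\Rd n_\nu \Lap W_\nu \dx x \dx t \leq C,
\end{equation*}
independent of $\nu$, thanks to the uniform entropy and second moment bounds of Proposition~\ref{lem:second_moment_nu}. Multiplying through by $\nu$ shows that the remainder term is controlled by $C\nu$, hence vanishes in the limit:
\begin{equation*}
    \int_0^T \!\!\! \int_\Rd n_\nu(n_\nu - W_\nu)\dx x \dx t \leq C\nu \to 0.
\end{equation*}

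For the cross term, $n_\nu \rightharpoonup n_0$ weakly in $L^2(0,T;L^2(\Rd))$ while $W_\nu \to n_0$ strongly in the same space by Proposition~\ref{prop:Identification of limit}, so the weak--strong pairing gives
\begin{equation*}
    \int_0^T \!\!\! \int_\Rd n_\nu W_\nu \dx x \dx t \to \int_0^T \!\!\! \int_\Rd n_0^2 \dx x \dx t.
\end{equation*}
Combining the last three displays, $\|n_\nu\|_{L^2(0,T;L^2(\Rd))}^2 \to \|n_0\|_{L^2(0,T;L^2(\Rd))}^2$, which, together with the weak convergence, yields the desired strong convergence. The only nontrivial input is the uniform entropy-dissipation bound on $-\!\int\!\!\int n_\nu \Lap W_\nu$ provided by the previous section; the rest is essentially algebraic, so I do not anticipate a real obstacle here.
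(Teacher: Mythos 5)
Your proposal is correct and rests on exactly the same ingredients as the paper's proof: the identity $n_\nu - W_\nu = -\nu\Delta W_\nu$ from Brinkman's law, the $\nu$-uniform entropy bound \eqref{eq: unif bnd nDeltaW} on $-\int_0^T\!\!\int_\Rd n_\nu\Delta W_\nu\dx x\dx t$, and the strong convergence $W_\nu\to n_0$ from Proposition~\ref{prop:Identification of limit}. The only difference is the final packaging: the paper estimates $\norm{n_\nu-W_\nu}_{L^2(0,T;L^2(\Rd))}^2\leq C\nu$ directly and concludes by the triangle inequality (which in addition yields the quantitative rate $O(\sqrt{\nu})$), whereas you conclude via weak convergence combined with convergence of the $L^2$-norms.
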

\begin{proof}
    Using Brinkman's law and the entropy inequality, \cf \eqref{eq: unif bnd nDeltaW}, we have
    	\begin{equation*}
		\begin{aligned}
			\int_0^T\int_\Rd |n_\nu -W_\nu|^2 \dx x \dx t &=
			 \int_0^T\int_\Rd(n_\nu -W_\nu ) (n_\nu -W_\nu ) \dx x\dx t                   \\
			&= - \nu \int_0^T \!\!\! \int_\Rd  n_\nu \Lap W_\nu  \dx x \dx t+ \nu
			\int_0^T \!\!\! \int_\Rd  W_\nu \Lap W_\nu  \dx x \dx t                           \\
			&= - \nu \int_0^T \!\!\! \int_\Rd  n_\nu \Lap W_\nu  \dx x \dx t-
			\nu \int_0^T \!\!\! \int_\Rd  \abs{\grad W_\nu}^{2} \dx x \dx t\\
            &\leq C\nu. 
		\end{aligned}
	\end{equation*}
    Consequently,
    \begin{align*}
        \norm{n_\nu - n_{0} }_{L^{2}(0,T;L^{2}(\Rd)) }
			 & \leq \norm{n_\nu -W_\nu }_{L^{2}(0,T;L^{2}(\Rd))} +
			\norm{W_\nu - n_{0}}_{L^{2}(0,T;L^{2}(\Rd))}\\[0.5em] 
             & \leq C\sqrt{\nu} + \norm{W_\nu - n_{0}}_{L^{2}(0,T;L^{2}(\Rd))}.  
    \end{align*}
 
\end{proof}

\section{Passage to the Limit}
\label{sec:passing_to_the_limit}
In this section, we conclude the passage to the limit $\nu\to0$. First, having established the strong compactness of the total density $n_\nu$, we can pass to the limit in the equation for $n_\nu$ to obtain (in the distributional sense)
\begin{equation}
\label{eq:totallimit}
		\partialt {n_{0}} - \div \left( n_{0} \grad n_{0}  \right) = n_{0}^{(1)}G^{(1)}(n_{0} ) + n_{0}^{(2)}G^{(2)}(n_{0} ),
\end{equation}
where $ n^{(i)}_{0}  $ is the weak limit of $ n^{(i)}_{\nu} $. 

To pass to the limit in the equations for the individual densities we will use the entropy inequality~\eqref{eq:entropy_inequality} together with the corresponding entropy \emph{equality} satisfied by the solution to Eq.~\eqref{eq:totallimit} to upgrade the weak convergence of the velocity $\nabla W_\nu$ to strong convergence.
From the previous estimates we have
\begin{equation*}
	\begin{aligned}
		n_{0} \in C_w\left( [0,T];L^{2}(\Rd) \right) \cap L^{2}(0,T;H^{1}(\Rd)) \cap
		L^{\infty}(0,T;L^1(\R^d)\cap L^\infty(\Rd)).
	\end{aligned}
\end{equation*}
This regularity is enough to guarantee the entropy equality
\begin{equation}
\label{eq:entropy_inequality_n0}
	\mathcal{H}[n_0](T) + \int_0^T \!\!\! \int_\Rd  \abs{\grad n_0}^{2} \dx x \dx t- R\prt*{n_{0}^{(1)}, n_{0}^{(2)}, n_{0}}
	=
	\mathcal{H}[n_0^{\mathrm{in}}],
\end{equation}
where
\begin{align*}
	R\prt*{n_{0}^{(1)}, n_{0}^{(2)}, n_{0}} := \int_0^T \!\!\! \int_\Rd \log n_0 \left[ n_{0}^{(1)}G^{(1)}(n_{0}) +n_{0}^{(2)}G^{(2)}(n_{0}) \right]  \dx x \dx t.
\end{align*}
Making use of the assumption
\begin{align*}
	\mathcal{H}[n_\nu^\mathrm{in}] \leq \mathcal{H}[n_{0}^\mathrm{in}],
\end{align*}
we can compare the two entropy inequalities~\eqref{eq:entropy_inequality} and~\eqref{eq:entropy_inequality_n0}.
Let
\begin{align*}
 	R\prt*{n_\nu^{(1)}, n_\nu^{(2)}, n_\nu} := \int_0^T \!\!\! \int_\Rd \log n_\nu\left[ n_\nu^{(1)}G^{(1)}(n_\nu) +n_\nu^{(2)}G^{(2)}(n_\nu) \right]  \dx x \dx t.
 \end{align*}
Upon rearranging the two inequalities we get
\begin{align}
    \label{eq:entropies-stacked}
    \begin{split}
	- \int_0^T \!\!\! \int_\Rd  n_\nu \Lap W_\nu \dx x \dx t
	 & \leq  \mathcal{H}[n_\nu^\mathrm{in}] - \mathcal{H}[n_\nu](T) + R\prt*{n_\nu^{(1)}, n_\nu^{(2)}, n_\nu} \\[0.5em]
	 & \leq  \mathcal{H}[n_{0}^\mathrm{in}] - \mathcal{H}[n_\nu](T) + R\prt*{n_\nu^{(1)}, n_\nu^{(2)}, n_\nu}   \\[0.5em]
	 & = \int_0^T\int_\Rd \abs*{\nabla n_0}^2 \dx x \dx t + \mathcal{E}_\nu,
  \end{split}
\end{align}
where, for convenience, we use the notation
\begin{align*}
	\mathcal{E}_\nu := \mathcal{H}[n_{0}](T) - \mathcal{H}[n_\nu](T) + R\prt*{n_\nu^{(1)}, n_\nu^{(2)}, n_\nu} - R\prt*{n_{0}^{(1)}, n_{0}^{(2)}, n_{0}}.
\end{align*}

Recall that
\begin{align*}
	\int_0^T \!\!\! \int_\Rd \abs*{\nabla W_\nu}^2 \dx x \dx t & = -\int_0^T \!\!\! \int_\Rd W_\nu  \Lap W_\nu  \dx x \dx t \\
    & = - \int_0^T \!\!\! \int_\Rd n_\nu  \Lap W_\nu  \dx x \dx t + \int_0^T \!\!\! \int_\Rd (n_\nu-W_\nu) \Lap W_\nu \dx x \dx t \\
    & = - \int_0^T \!\!\! \int_\Rd n_\nu  \Lap W_\nu  \dx x \dx t -\frac{1}{\nu} \int_0^T \!\!\! \int_\Rd \abs*{n_\nu-W_\nu}^2 \dx x \dx t  \\
    & \leq - \int_0^T \!\!\! \int_\Rd n_\nu  \Lap W_\nu  \dx x \dx t.
\end{align*}
This, in conjunction with Eq. \eqref{eq:entropies-stacked}, gives
\begin{align}
\label{eq:uppersemicts}
    \int_0^T \!\!\! \int_\Rd \abs*{\nabla W_\nu}^2 \dx x \dx t \leq \int_0^T \!\!\! \int_\Rd |\nabla n_0|^2 \dx x \dx t + \mathcal{E}_{\nu}.
\end{align}
We will now investigate the term $\mathcal{E}_{\nu}$ and show that it vanishes in the limit $\nu\to0$.

Given the uniform $L^1$ bound on $n_\nu(T)$ and uniform control of the second moment, we infer that the sequence $n_\nu(T)$ converges to $n_0(T)$ weakly in $L^1(\Rd)$. Therefore, by the convexity of the entropy functional, we have
\begin{equation*}
    \mathcal{H}[n_{0}](T) \leq \liminf_{\nu\to 0}\mathcal{H}[n_\nu](T),
\end{equation*}
or equivalently
\begin{equation*}
    \limsup_{\nu\to 0}\prt*{\mathcal{H}[n_{0}](T) - \mathcal{H}[n_\nu](T)} \leq 0.
\end{equation*}

Let us now focus on the reaction terms. We shall use the tightness of the sequence $(n_\nu)_{\nu}$, as well as its pointwise convergence to $n_0$, to split the domain of integration into a subdomain on which the terms corresponding to $n_\nu$ converge to their counterparts for $n_0$ and remainders which can be made arbitrarily small.

Let $\varepsilon>0$ and $0<\beta<e^{-1}$. By tightness, there exists a set $K\subset \Rd\times[0,T]$ of finite measure such that
\begin{equation}
    \sup_{\nu>0} \iint_{K^c} n_\nu \leq \varepsilon.
\end{equation}
Furthermore, by Egorov's Theorem there exists $A\subset K$ with $\mathrm{meas}(A)<\varepsilon$ such that $n_\nu$ converges uniformly to $n_0$ on $K\setminus A$. Then, for $\nu$ small enough, we have
\begin{equation*}
    n_\nu \geq \beta \implies n_0 \geq \frac{\beta}{2}.
\end{equation*}

Writing
\begin{equation*}
    \int_0^T\!\!\!\int_\Rd n\i_\nu\log n_\nu G\i(n_\nu)\dx x \dx t = \iint_{K^c} n\i_\nu\log n_\nu G\i(n_\nu)\dx x \dx t + \iint_K n\i_\nu\log n_\nu G\i(n_\nu)\dx x \dx t,
\end{equation*}
we can guarantee that the first integral is small. Indeed, letting $\bar n\i$ be the zero of $G\i$, we have
\begin{align*}
    \iint_{K^c\cap\set{n_\nu<\bar n\i}} n\i_\nu\log n_\nu G\i(n_\nu)\dx x \dx t &\leq \iint_{K^c\cap\set{n_\nu<\bar n\i}} n\i_\nu(n_\nu-1) G\i(n_\nu)\dx x \dx t\\
    &\leq \prt*{\norm{G\i}_{L^\infty(\R)}+\norm{n_\nu}_{L^\infty(\Rd)}} \iint_{K^c}n_\nu\dx x \dx t\\
    &\leq C\varepsilon,
\end{align*}
and
\begin{align*}
    \iint_{K^c\cap\set{n_\nu\geq\bar n\i}} & n\i_\nu\log n_\nu G\i(n_\nu)\dx x \dx t\\
    &\leq \norm{G\i}_{L^\infty(\R)}\iint_{K^c\cap\set{n_\nu<\bar n\i}} n_\nu|\log n_\nu|\dx x \dx t\\
    &\leq \norm{G\i}_{L^\infty(\R)}\iint_{K^c\cap\set{n_\nu\geq\bar n\i}} \prt{\sqrt{n_\nu}+n_\nu^2}\dx x \dx t\\
    &\leq \norm{G\i}_{L^\infty(\R)}\prt*{\norm{n_\nu}_{L^\infty(0,T;L^\infty(\Rd))}+\frac{1}{\sqrt{\bar n\i}}}\iint_{K^c} n_\nu\dx x \dx t\\
    &\leq C\varepsilon.
\end{align*}

It remains to treat the integral over $K$. This we split into three disjoint subdomains as follows
\begin{equation*}
    K = \prt*{K\setminus A \cap \set{n_\nu\geq\beta}}\, \cup\, \prt*{K\setminus A \cap \set{n_\nu< \beta}}\,\cup\, A \equiv K_1 \cup K_2 \cup A. 
\end{equation*}
For the integral over $A$, we notice that
\begin{align*}
    \iint_A n\i_\nu\log n_\nu G\i(n_\nu)\dx x \dx t \leq \norm{G^{(i)}}_{L^{\infty}(\R)}\prt*{ 1+\norm{n_\nu}_{L^\infty(0,T;L^\infty(\Rd))}^{2}} \mathrm{meas}(A) \leq C\varepsilon,
\end{align*}
while the integral over $K_2$ is bounded as follows
\begin{align*}
    \iint_{K_2} n\i_\nu\log n_\nu G\i(n_\nu)\dx x \dx t \leq \norm{G^{(i)}}_{L^{\infty}(\R)}\mathrm{meas}(K)\beta|\log\beta| \leq C\beta|\log\beta|.
\end{align*}
Finally, on the set $K_1$, the sequence $\log n_\nu$ converges strongly to $\log n_0$ and therefore
\begin{equation*}
    \iint_{K_1} n\i_\nu\log n_\nu G\i(n_\nu)\dx x \dx t \to \iint_{K_1} n\i_0\log n_0 G\i(n_0)\dx x \dx t.
\end{equation*}

It follows that
\begin{align*}
    & R\prt*{n_\nu^{(1)}, n_\nu^{(2)}, n_\nu} - R\prt*{n_{0}^{(1)}, n_{0}^{(2)}, n_{0}} \\
    &\quad\leq \abs*{\iint_{K_1} n\1_\nu\log n_\nu G\1(n_\nu)\dx x \dx t - \iint_{K_1} n\1_0\log n_0 G\1(n_0)\dx x \dx t} \\
    &\qquad+ \abs*{\iint_{K_1} n\2_\nu\log n_\nu G\2(n_\nu)\dx x \dx t - \iint_{K_1} n\2_0\log n_0 G\2(n_0)\dx x \dx t} + C\prt*{\varepsilon + \beta|\log\beta|}.
\end{align*}

Passing to the limit in Eq.~\eqref{eq:uppersemicts}, we therefore deduce
\begin{align*}
    \limsup_{\nu\to0} \int_0^T\!\!\!\int_\Rd\abs*{\nabla W_\nu}^2 \dx x \dx t
    &\leq \int_0^T \!\!\! \int_\Rd |\nabla n_0|^2 \dx x \dx t + \limsup_{\nu\to0}\mathcal{E}_{\nu}\\
    &\leq \int_0^T \!\!\! \int_\Rd |\nabla n_0|^2 \dx x \dx t + C\prt*{\varepsilon + \beta|\log\beta|}.
\end{align*}
Since $\varepsilon$ and $\beta$ can be chosen arbitrarily small, we deduce the desired upper semicontinuity property
\begin{equation}
\label{eq:nablaWstrong}
    \limsup_{\nu\to0} \int_0^T\!\!\!\int_\Rd\abs*{\nabla W_\nu}^2 \dx x \dx t\\
    \leq \int_0^T \!\!\! \int_\Rd |\nabla n_0|^2 \dx x \dx t.
\end{equation}
On the other hand, from the weak convergence $\nabla W_\nu \rightharpoonup \nabla n_0$ in $L^2(0,T;L^2(\Rd))$ we have 
\begin{equation*}
        \liminf_{\nu\to0} \int_0^T\!\!\!\int_\Rd\abs*{\nabla W_\nu}^2 \dx x \dx t
    \geq \int_0^T \!\!\! \int_\Rd |\nabla n_0|^2 \dx x \dx t.
\end{equation*}
Putting the two inequalities together we deduce
\begin{equation*}
        \lim_{\nu\to0} \int_0^T\!\!\!\int_\Rd\abs*{\nabla W_\nu}^2 \dx x \dx t = \int_0^T \!\!\! \int_\Rd |\nabla n_0|^2 \dx x \dx t.
\end{equation*}
It follows that $\nabla W_\nu \to \nabla n_0$, strongly in $L^2(0,T;L^2(\Rd))$.
Armed with the strong convergence of the velocities, we can now pass to the limit in System~\eqref{eq:Brinkman} to obtain
\begin{equation*}
    \partial_{t}n^{(i)}_{0} + \div \left(n^{(i)}_{0} \grad n_{0}  \right) = n_{0}^{(i)}G^{(i)}(n_{0} ),\qquad i=1,2.
\end{equation*}
This concludes the last step of the proof of Theorem~\ref{thm:inviscid_limit}.

\section{A Short Remark on Phenotypically Stratified Models}
Let us conclude our analysis with a short remark on an interesting extension of the model. In recent years, there has been a growing interest in structured models for tissue growth. As the aggressiveness of a tumour is intimately connected to the cells' phenotypic traits which, in turn, influence several characteristics, such as cell motility or cell division rate, a phenotypic variable, $y\in[0,1]$, may be introduced to account for the population's heterogeneity. In~\cite{Dav2022}, the author shows the existence of weak solutions to 
\begin{align*}
    \partialt {n_0}(y, x, t) = \nabla\cdot(n_0\nabla N_0) + n_0G(y, N_0),
\end{align*}
where all operators act on the $x$-variable,
which can be understood as a structured counterpart of System \eqref{eq:Darcy}. Here, the total population density is given by the integral of the phenotypic densities along the interval of traits, \ie,
\begin{align*}
    N_0(x,t) := \int_0^1 n_0(y, x, t) \dx y,
\end{align*}
acting as the infinite-dimensional  counterpart of $n_0 = n_0\1 + n_0\2$. In the context of viscoelastic tissues, we propose a structured Brinkman model of the form
\begin{align*}
	\left\{
	\begin{array}{rl}
		\ds \partialt{n_\nu}(y,x,t) - \nabla \cdot \prt*{n_\nu \grad W_\nu} \!\!\!& = \ds n_\nu G(y, N_\nu), \\[1em]
		\ds -\nu \Lap W_\nu +W_\nu \!\!\! & = \ds  N_\nu,
	\end{array}
	\right.
\end{align*}
where, again, all operators act in physical space. Formally, we obtain the entropy dissipation inequality of the total population, \ie, 
\begin{align*}
    \curlyH[N_\nu](T) - \curlyH[N_\nu^{\mathrm{in}}]
		  - \int_0^T \!\!\! \int_\Rd  N_\nu \Lap W_\nu \dx x \dx t \leq \int_0^T \!\!\! \int_\Rd \log N_\nu\int_0^1 n_\nu(y) G(y,N_\nu) \dx y \dx x \dx t,
\end{align*}
paralleling that of Lemma \ref{lem:entropy_inequality}. Using a similar argument as in the main part of the paper, we expect to obtain
\begin{align*}
    \lim_{\nu \to 0} \int_0^T \!\!\! \int_\Rd \abs*{\nabla W_\nu}^2 \dx x \dx t = \int_0^T \!\!\! \int_\Rd \abs*{\nabla N_0}^2 \dx x \dx t,
\end{align*}
which gives strong reason to believe that our strategy is applicable in other contexts.

\section*{Acknowledgements}
T.D. would like to acknowledge the hospitality of Technische Universit\"at Dresden during his research stay, where the foundation for this project was laid. M.S. is happy to acknowledge the fruitful discussions during his time as visiting professor at the University of Warsaw during the Thematic Research Programme `PDEs in Mathematical Biology' (Excellence Initiative Research University).
 
\bibliographystyle{dinat}

\end{document}